\documentclass[11pt]{article}
\usepackage{amsmath}    
\usepackage{amssymb}    
\usepackage{amsthm}     
\usepackage{amscd}      
\usepackage{mathtools}  
\usepackage{dsfont}     
\usepackage{latexsym}   
\usepackage{mathrsfs}   
\usepackage{bm}         

\usepackage{graphicx}      
\usepackage{float}         
\usepackage{caption}       
\usepackage{subcaption}    
\usepackage{xcolor}        

\usepackage[sort&compress]{natbib}
 \bibpunct[, ]{[}{]}{,}{n}{}{,}%
 \def\bibsep{\smallskipamount}%

\renewcommand{\cite}{\citet*} 
\setcitestyle{authoryear,round} 

\usepackage[british]{babel} 
\usepackage[pdfstartview=FitH,bookmarksnumbered=true,bookmarksopen=true,
            colorlinks=true,pdfborder={0 0 1},citecolor=blue,
            linkcolor=blue,urlcolor=blue]{hyperref}

\usepackage[ruled,vlined]{algorithm2e}

\usepackage{fancyhdr}      
\usepackage{indentfirst}   
\usepackage{enumerate}     
\usepackage{setspace}      
\usepackage[hang,flushmargin]{footmisc} 

\usepackage[misc]{ifsym}   

\usepackage[top=3.2cm, bottom=3.3cm, left=2.8cm, right=2.8cm]{geometry}

\usepackage[right,pagewise,mathlines]{lineno} 
\usepackage{comment}  
\usepackage{url}      

\DeclareMathOperator{\diag}{diag}    

\def\d{\mathrm{d}} 

\newcommand{\E}{\mathbb{E}}
\newcommand{\mP}{\mathbb{P}}        
\newcommand{\mR}{\mathbb{R}}        

\newcommand{\mb}[1]{\mathbb{#1}}    
\newcommand{\mbf}[1]{\mathbf{#1}}   
\newcommand{\mc}[1]{\mathcal{#1}}   

\newcommand{\vy}{{\mathbf{y}}}
\newcommand{\vx}{{\mathbf{x}}}
\newcommand{\vsigma}{\boldsymbol{\sigma}}
\newcommand{\vbeta}{\boldsymbol{\beta}}
\newcommand{\valpha}{\boldsymbol{\alpha}}
\newcommand{\vdelta}{\boldsymbol{\delta}}
\newcommand{\vepsilon}{\boldsymbol{\varepsilon}}
\newcommand{\vvarphi}{\boldsymbol{\varphi}}
\newcommand{\vb}{{\boldsymbol{b}}}
\newcommand{\vW}{{\mathbf{W}}}
\newcommand{\vS}{{\mathbf{S}}}
\newcommand{\vX}{{\mathbf{X}}}
\newcommand{\vY}{{\mathbf{Y}}}
\newcommand{\vz}{{\mathbf{z}}}
\newcommand{\vK}{{\mathbf{K}}}

\newcommand{\htB}{\hat{\boldsymbol{b}}}
\newcommand{\htBi}{\hat{b}^i}
\newcommand{\htW}{\widehat{W}}
\newcommand{\htwb}{\vW^{\hat{\vb}}}
\newcommand{\mpB}{\mathbb{P}^{\hat{\vb}}}

\newcommand{\pt}{\partial}          
\newcommand{\nm}[1]{\left\| #1 \right\|}  

\newcommand{\lt}{\left}
\newcommand{\rt}{\right}

\DeclareMathOperator{\Law}{Law}

\renewcommand{\ge}{\geqslant}
\renewcommand{\le}{\leqslant}

\renewcommand{\leq}{\leqslant}
\renewcommand{\epsilon}{\varepsilon}

\theoremstyle{plain}
\newtheorem{theorem}{Theorem}[section]

\newtheorem{lemma}{Lemma}[section]
\newtheorem{proposition}{Proposition}[section]

\theoremstyle{definition}

\newtheorem{example}{Example}[section]

\theoremstyle{remark}
\newtheorem{remark}{Remark}[section] 

\theoremstyle{definition}

\numberwithin{equation}{section} 

\begin{document} 
	
\title{
Robust Bayesian Portfolio Optimization with Discrepancy-based Posterior Ambiguity

}

\author{
Zongxia Liang\thanks{\scriptsize Department of Mathematical Sciences, Tsinghua University, Beijing, China. Email: \texttt{liangzongxia@mail.tsinghua.edu.cn}}
\and Yang Liu\thanks{\scriptsize School of Science and Engineering, The Chinese University of Hong Kong (Shenzhen), Shenzhen, China. Email: \texttt{yangliu16@cuhk.edu.cn}} 
\and Xingjian Ma\thanks{\scriptsize Corresponding author. Department of Mathematical Sciences, Tsinghua University, Beijing, China. Email: \texttt{mxj22@mails.tsinghua.edu.cn}}
}

\date{}

\maketitle

\begin{abstract}

We study a continuous-time robust Bayesian portfolio optimization problem under drift uncertainty of risky assets. The investor learns unknown asset drifts through Bayesian filtering while considering uncertainty around posterior estimates via discrepancy-based ambiguity sets, including Wasserstein and $L^p$ distances. To address the resulting time inconsistency, we introduce a feedback-type ambiguity framework that reformulates ambiguity conditionally on observable states. This leads to a modified Hamilton--Jacobi--Bellman--Isaacs (HJBI) equation characterizing the value function and the optimal strategy. For a semi-explicit solution example, we use the exponential utility to derive a reduced semilinear parabolic PDE and establish existence of classical solutions via a verification theorem. 

\noindent \textbf{Keywords}: Robust control, Bayesian learning, Stochastic control, Hamilton--Jacobi--Bellman--Isaacs equation, Discrepancy-based ambiguity, Wasserstein distance. 

\noindent \textbf{MSC codes}: 91G10, 93E20, 49L20. 
\end{abstract}

\section{Introduction}\label{sec:intro}

Continuous-time portfolio optimization has been a central topic in stochastic control and mathematical finance since the pioneering works of \cite{merton1969lifetime,merton1975optimum}. In the classical formulation, an investor dynamically allocates wealth among financial assets so as to maximize the expected utility of the terminal wealth, and the market coefficients, especially the drift vector of risky assets, are assumed to be known. This assumption is restrictive in practice. Asset return drifts are difficult to estimate and are typically not directly observable from market data. This has motivated a large stream of literature on portfolio optimization under partial information, where Bayesian learning and stochastic filtering are used to construct and update the Bayesian estimator of the unknown drift from observed asset prices; see, e.g., \cite{KZ01}, \cite{BGP19}, \cite{DNP19} and \cite{ZZ22}.

Bayesian learning provides a natural mechanism for incorporating information over time. However, the resulting Bayesian estimator process remains sensitive to the prior distribution, the filtering model and the assumed market specification. Prior misspecification, filtering errors or structural uncertainty may lead to overconfident posterior Bayesian estimators and fragile portfolio decisions. Thus, although Bayesian learning addresses the statistical uncertainty arising from partial observation, it does not by itself provide protection against ambiguity surrounding the Bayesian estimator.

A natural way to address this issue is to incorporate robustness into the Bayesian learning framework. Robust control has become an important tool for dynamic decision making under model uncertainty; see, for example, \cite{hansen2001robust}, \cite{uppal2003model}, \cite{garlappi2007portfolio}, \cite{jin2015continuous}, \cite{NN18} and \cite{Pesenti2023}. More recently, distributionally robust optimization (DRO) has provided a flexible methodology for modeling ambiguity through discrepancy-based neighborhoods around reference distributions; see, e.g., \cite{delage2010distributionally}, \cite{goh2010distributionally}, \cite{wiesemann2013robust}, \cite{Lam2016}, \cite{BM19}, \cite{Blanchet2021WassersteinDRO}, \cite{rahimian2022frameworks}, \cite{gao2024wasserstein}, \cite{jiang2024data} and \cite{Kuhn2025}. Owing to its flexibility and strong out-of-sample performance, DRO has attracted substantial attention in optimization, operations research and machine learning. Its application to continuous-time portfolio optimization and mathematical finance, however, remains comparatively limited, especially in settings involving dynamic learning and partial information; only a few studies (see, e.g., \cite{bardakci2019distributionally,blanchet2022distributionally,fan2024distributionally}, \cite{blanchet2025bayesian}) have appeared in recent years. This observation motivates a robust Bayesian formulation in which ambiguity is imposed directly on the Bayesian estimator process generated by partial observations, rather than on a fixed parameter or a static reference distribution.


In this paper, we study a continuous-time robust Bayesian portfolio optimization problem under drift uncertainty. The investor observes asset prices, updates the Bayesian estimator of the unknown drift, and evaluates portfolio strategies against a family of relaxed Bayesian estimators lying in discrepancy-based ambiguity sets around the posterior Bayesian estimator. The ambiguity level may vary over time and with the information generated by the market, thereby allowing the investor's confidence in the Bayesian estimator process to evolve dynamically. This formulation accommodates a broad class of ambiguity specifications, including Wasserstein, $L^p$, pathwise and multiple-constraint formulations.

The main difficulty is that Bayesian-estimator-level ambiguity naturally induces time inconsistency. Because the posterior Bayesian estimator and the ambiguity tolerance level evolve over time, a worst-case relaxed Bayesian estimator selected at one time need not remain worst-case at later times. Consequently, the standard dynamic programming principle cannot be applied directly. To overcome this difficulty, we introduce a feedback-type ambiguity framework in which admissible relaxed Bayesian estimators are reformulated conditionally on observable state variables. This reformulation preserves the robust Bayesian interpretation while restoring analytical tractability. It leads to a modified Hamilton--Jacobi--Bellman--Isaacs (HJBI) equation, through which we characterize the value function and the optimal feedback strategy by PDE methods, relying on classical second-order parabolic PDE theory; see, e.g., \cite{ladyzhenskaia1968linear}, \cite{krylov1987nonlinear} and \cite{lieberman1996second}.

The contributions of this paper are threefold. First, we formulate a continuous-time robust Bayesian portfolio optimization framework under drift uncertainty in which discrepancy-based ambiguity sets are imposed directly around the posterior Bayesian estimator. This provides a Bayesian-estimator-level robustification of the learning procedure, rather than a static or prior-level perturbation of model parameters. Second, we develop a feedback-type ambiguity reformulation that resolves the time inconsistency generated by the dynamically evolving Bayesian estimator process and its associated ambiguity sets. The resulting modified HJBI equation yields a characterization of the value function and the optimal robust feedback strategy. Third, we establish existence, verification and extension results for a broad class of ambiguity structures, including Wasserstein, $L^p$, pathwise and multiple-constraint ambiguity sets, all within a unified continuous-time PDE framework.

Our approach is connected to several strands of literature. From the modeling perspective, it is related to Bayesian ambiguity and time-inconsistent portfolio models such as \cite{BM14}, \cite{BKM17}, \cite{GLX25} and \cite{Han2025}. Unlike preference-based or equilibrium formulations, our ambiguity is introduced through measure-based discrepancy neighborhoods of the posterior Bayesian estimator. From the methodological perspective, our work is related to robust portfolio optimization under parameter uncertainty, especially \cite{pham17}, \cite{pham19} and \cite{pham22}. However, the presence of Bayesian filtering enlarges the state space and substantially complicates the associated HJBI equation, making explicit solutions generally unavailable. The feedback-type reformulation developed in this paper is designed precisely to handle this interaction between learning, robustness and dynamic optimization. Meanwhile, a related strand of literature studies distributionally robust control (DRC) or distributionally robust Markov decision processes (DRMDP), which typically adopt time-rectangular ambiguity sets that preserve the dynamic programming structure; see, e.g., \cite{iyengar2005robust}, \cite{nilim2005robust}, \cite{hansen2008robustness}, \cite{xu2010distributionally}, \cite{wiesemann2013robust}, \cite{BKN21} and \cite{lu2024distributionally}.  Our posterior Bayesian estimator ambiguity differs from this standard rectangular formulation, because it evolves endogenously with the Bayesian estimator and therefore creates a time-consistency issue.

Finally, our framework should be distinguished from existing robust Bayesian portfolio models. Earlier studies mainly combine posterior Bayesian learning with entropy-based model misspecification penalties or static parameter uncertainty sets, often in discrete-time or single-period settings; see, e.g., \cite{anderson2016robust}, \cite{SZL23} and \cite{quimbayo2025robust}. The closest continuous-time contribution is \cite{blanchet2025bayesian}, which imposes optimal-transport-type ambiguity on the drift prior. Their ambiguity is imposed ex ante at the prior level and the resulting strategy is interpreted in a pre-committed sense, whereas we impose discrepancy-based ambiguity directly around the posterior Bayesian estimator and allow it to evolve with the Bayesian estimator process and observable market information. Methodologically, their tractability relies on a minimax reduction over candidate priors, while our tractability comes from the feedback-type ambiguity reformulation and the modified HJBI equation. Moreover, while \cite{blanchet2025bayesian} focuses on prior-level optimal-transport-type uncertainty, our framework covers Wasserstein, $L^p$, pathwise and multiple-constraint posterior-estimator ambiguity structures in a unified continuous-time PDE setting.

The remainder of this paper is organized as follows. Section~\ref{sec:prob_setup} introduces the financial market, the Bayesian filtering framework, the ambiguity specifications and the robust Bayesian optimization problem. Section~\ref{sec:HJB} develops the martingale optimality principle, derives the modified HJBI equation, obtains the corresponding semi-explicit solution through a reduced parabolic PDE and characterizes the optimal feedback controls. Section~\ref{sec:verification} proves the verification theorem. Section~\ref{sec:example_and_extensions} verifies the required conditions for several representative ambiguity sets and discusses extensions. Section~\ref{sec:conclusion} concludes.

\section{Robust Bayesian Optimization Problem}\label{sec:prob_setup}


In this section, we formulate a robust Bayesian optimization problem by introducing uncertainty around the posterior Bayesian estimator through the discrepancy-constrained ambiguity set. We first present the market model and the Bayesian learning framework, and then introduce the formulation of the ambiguity set and the associated robust control problem. This formulation provides the foundation for the HJBI analysis and verification results developed in the subsequent subsections.


\subsection{Financial Market and Bayesian Estimation}

We consider a financial market under partial information, where the drift of risky assets is unknown and must be inferred from observed price dynamics. In this setting, investors continuously update their beliefs about the unobservable drift through Bayesian filtering as new market information becomes available.

Let $(\Omega, \mathcal{F}, \{\mathcal{F}_t\}_{t \in [0,T]}, \mathbb{P})$ be a filtered complete probability space, where the constant $T > 0$ is the time horizon and the filtration $\{\mathcal{F}_t\}_{t \in [0,T]}$ satisfies the usual conditions and represents the whole information of the financial market. Assume that $\{\vW_t = (W_t^1, \cdots, W_t^d)^\top  \}_{t \in [0,T]}$  is a $d$-dimensional Brownian motion adapted to $\{\mathcal{F}_t \}_{t \in [0,T]}$, with correlation structure given by
$\d \langle W^i, W^j \rangle_t = \rho^{ij} \,\mathrm{d} t$ for all $i,j \in \{1,\cdots,d\}$.

The financial market consists of two types of assets: one bond (risk-free asset) and $d$ stocks (risky assets). The risk-free interest rate is denoted by $r$. The price of the assets $\{ \vS_t = (S_t^1, \cdots, S_t^d)^\top \}_{t\in [0,T]}$ has the classic log-normal dynamic:
\begin{equation*}
\d \vS_t = \diag (\vS_t)(\vb \,\mathrm{d} t + \vsigma \odot \d \vW_t),
\end{equation*}
where $\odot$ denotes the element-wise multiplication of vectors. Here, the constant volatility vector $\vsigma = (\sigma^1, \cdots, \sigma^d)^\top$ satisfies $\sigma^i > 0$, $\forall i \in \{1,\cdots,d\}$, and the random drift vector $\mbf{b} = (b^1, \cdots, b^d)^\top$ is unknown. Throughout the paper, we denote by 
$\rho = (\rho^{ij})_{1\leq i,j \leq d}$ and 
$\Sigma = (\rho^{ij} \sigma^i \sigma^j)_{1\leq i,j \leq d}$ 
the correlation and covariance matrices associated with the dynamics of prices respectively.

\begin{remark}
  In classic literature (see, e.g., \cite{KZ01}, \cite{BGP19}), without loss of generality, we can assume that $r=0$ so that the risk-free asset becomes a constant. As our modeling is slightly different, we will apply this assumption later to make the fact clear for the readers.
\end{remark}

In practice, the investor does not have access to the whole information but observes the evolutions of the asset prices. Therefore the filtration of her admissible information is the natural filtration $\mb{F}^\vS:=\{\mathcal{F}_t^\vS\}_{t \in [0,T]}$ generated by the stock price process $\vS = \{ \vS_t \}_{t \in [0,T]}$. 
Consider the Bayesian estimator $\vbeta = \{\vbeta_t\}_{t \in [0,T]}$ given by
\begin{equation*}
\vbeta_t \triangleq \mathbb{E}[\mbf{b} \mid \mathcal{F}_t^\vS], \quad t \in [0,T].
\end{equation*}
Following \cite{BGP19}, we assume that the prior distribution of $\vb$, denoted by $\mu_0$, satisfies the following condition: there exists $\eta > 0$ such that
\begin{equation}\label{eq:sub_gaussian_condition}
\mathbb{E}\bigl[e^{\eta \|\mu_0\|^2}\bigr] = \int_{\vz \in \mR^d} e^{\eta \|\vz\|^2} \, \mu_0(\d \vz) < \infty.
\end{equation}

Then, we can represent the dynamics with respect to the known information $\{\mathcal{F}_t^\vS\}_{t \in [0,T]}$.
To represent the Bayesian estimator $\vbeta$ in terms of observable information, we introduce the process $\vY = \{ \vY_t \}_{t \in [0,T]}$ which is defined by $Y_t^i = \log S_t^i$, $\forall i \in \{1,\cdots,d\}$, $t \in [0,T]$. Then, the filtration $\mb{F}^\vS$ can be equivalently generated by $\vY$. By \cite[Theorem 1]{BGP19}, $\vbeta$ can be represented by
\begin{equation*}
\vbeta_t = \vvarphi(t, \vY_t), \quad t \in [0,T],
\end{equation*}
where
\begin{equation}\label{eq:varphi_formation}
    \begin{aligned}
       \vvarphi(t, \vY_t) & = \frac{\Sigma  \nabla_{\vy} F_1(t, \vY_t)}{F_1(t, \vY_t)}+r\vec{1}, \\
      F_1(t, \vy) & = \int_{\mR^d} \exp \left(
(\vz-r\vec{1})^\top \Sigma^{-1}
\left[
\vy-\vY_0+\left(-r\vec{1}+\frac{1}{2}\vsigma \odot \vsigma\right)t
-\frac{t}{2}(\vz-r\vec{1})
\right]
\right)
\, \mu_0(\d \vz).
    \end{aligned}
\end{equation}
Let us define the innovation process $\widehat{\vW} =\{ \widehat{\vW}_t \}_{t \in [0,T]}$ by
\begin{equation*}
\htW^i_t = W_t^i + \int_0^t \frac{b^i - \beta^i_s}{\sigma^i} \,\mathrm{d} s, \quad \forall i \in \{1,\cdots,d\}, \ \forall t \in [0,T].
\end{equation*}
As shown in \cite[Proposition 2]{BGP19}, $\{\widehat{\vW}_t \}_{t \in [0,T]}$ is a $d$-dimensional Brownian motion adapted to $\{\mc{F}^\vS_t \}_{t \in [0,T]}$ under probability $\mathbb{P}$, with the same correlation structure as $\{\vW_t \}_{t \in [0,T]}$.
Then, in terms of $\widehat{\vW}$, the stock price evolves according to the following stochastic differential equation (SDE):
\begin{equation*}
\d \vS_t = \diag{(\vS_t)} \lt( \vbeta_t  \,\mathrm{d} t + \vsigma \odot \d \widehat{\vW}_t \rt).
\end{equation*}
Meanwhile, 
\begin{equation*}
\d \vY_t = \lt(\vbeta_t - \frac{\vsigma \odot \vsigma}{2}\rt)\,\mathrm{d} t + \vsigma \odot\d \widehat{\vW}_t.
\end{equation*}
Here, $\{\widehat{\vW}_t \}_{t \in [0,T]}$ is called the innovation process. Expressing the dynamics in terms of the innovation process allows all coefficients to become adapted to the observable filtration $\mc{F}^\vS$, thereby reformulating the stock price dynamics entirely in terms of known information available to the investor.

\subsection{Ambiguity Setup}\label{subsec:ambiguity_setup}

Although Bayesian learning provides dynamic estimates of unknown market parameters, the resulting posterior process may still be affected by estimation errors, prior misspecification, or structural uncertainty. 
Therefore, we define a general class of ambiguity sets through discrepancy constraints with time-varying tolerance levels. This robust formulation enables the investor to incorporate uncertainty not only in the underlying market model, but also in the reliability of the Bayesian estimator itself.

First, we define the relaxed Bayesian estimator $\htB=\{ \htB_t = \htB(t,\vY_t) \}_{t\in[0,T]}$ as an $\mR^d$-valued process in the admissible ambiguity set $\mc{B}$, which allows the time-varying ambiguity tolerance level as follows:
\begin{equation}\label{eq:bayesian_ambiguity_set_general}
\mc{B} :=
\left\{
\htB = (\htB_t)_{t \in [0,T]} :
\begin{array}{l}
\htB_t = \htB(t,\vY_t),\\
\htB : [0,T]\times \mR^d \to \mR^d \text{ is Borel measurable},\\
\mc{D}(\htB_t,\vbeta_t) \le \epsilon(t),\quad \forall t \in [0,T]
\end{array}
\right\},
\end{equation}
where $\mc{D}$ is a discrepancy functional measuring the distance between two random variables. By the definition, we can see that $\htB$ is $\mb{F}^\vS$-progressively measurable.
Meanwhile, as the investor will be more confident when time goes on, we suppose that the time-varying tolerance level function $\epsilon: [0,T] \longrightarrow \mR_+$ is decreasing, and is $\alpha$-H\"older continuous for some $\alpha \in (0,1)$, denoted as $\epsilon(\cdot) \in C^{\alpha}[0,T]$.

\begin{remark}
Such requirement on the time-varying tolerance level $\epsilon(\cdot)$ is mild, as any decreasing polynomial function on $[0,T]$ satisfies this condition.
The monotonicity assumption on $\epsilon(\cdot)$ mainly becomes relevant in the multiple-constraint setting developed later, where componentwise ordering is needed to preserve the structural properties of the ambiguity set. In the single-constraint case, this condition can be relaxed to boundedness; see Section~\ref{sec:example_and_extensions} for the details.
Meanwhile, the H\"older continuity assumption is imposed primarily for analytical purposes, ensuring sufficient regularity of the associated source term in the reduced PDE~\eqref{eq:reduced_semi_explicit_HJB_equation} and thereby supporting well-posedness through classic parabolic PDE theory.
\end{remark}


Then, we present several representative examples of admissible ambiguity sets. We begin with an ambiguity set based on distributional discrepancy, which serves as the primary motivation of our framework, and then consider other discrepancy-based formulations that also fit into the general structure.

\begin{example}[Distributional discrepancy: p-Wasserstein distance]\label{ex:L2_distribution}
Let $\mc{D}$ be a discrepancy functional defined on probability measures, given by
\begin{equation*}
\mc{D}(\vX,\vY) := W_{p}\big( \Law(\vX), \Law(\vY) \big),
\end{equation*}
where $W_p$ is the p-Wasserstein distance induced by a norm $\|\cdot\|$ on $\mR^d$, defined by
\begin{equation*}
W_p(\mu,\nu)
:=
\left(
\inf_{\pi\in\Pi(\mu,\nu)}
\int_{\mR^d\times\mR^d}
\nm{\vx-\vy}^p\,\pi(\d \vx,\d \vy)
\right)^{1/p},
\end{equation*}
where $
\Pi(\mu,\nu)
:=
\left\{
\pi\in\mathcal{P}(\mR^d\times\mR^d)
:
\pi(\cdot,\mR^d)=\mu,\;
\pi(\mR^d,\cdot)=\nu
\right\} $
is the set of all couplings of $\mu$ and $\nu$.
Then the ambiguity set becomes
\begin{equation*}
\mc{B} =
\left\{
\htB :
\htB_t = \htB(t,\vY_t), \,
W_p\big( \Law(\htB_t), \Law(\vbeta_t) \big) \le \epsilon(t),\ \forall t \in [0,T]
\right\}.
\end{equation*}
\end{example}

\begin{example}[$L^p$-discrepancy]\label{ex:L2_expectation}
Let $\mc{D}$ be defined by
\begin{equation*}
\mc{D}(\vX,\vY) := \big( \E[ \|\vX - \vY\|^p ] \big)^{1/p}, \, p>1,
\end{equation*}
where $\|\cdot\|$ can be chosen as any norm on $\mR^d$. Then the ambiguity set becomes
\begin{equation*}
\mc{B} =
\left\{
\htB :
\htB_t = \htB(t,\vY_t), \,
\big( \E[ \|\htB_t - \vbeta_t\|^p ] \big)^{1/p} \le \epsilon(t),\ \forall t \in [0,T]
\right\}.
\end{equation*}
\end{example}
Actually, the ambiguity set can be chosen from a larger family. When we replace the single time-varying constraint $\mc{D}(\htB_t,\vbeta_t) \le \epsilon(t)$ in \eqref{eq:bayesian_ambiguity_set_general} by sample-path discrepancy or multiple discrepancy constraints as follows, our model incorporates these cases and our main results still remain true. We will show such extensions in Section~\ref{sec:example_and_extensions}.

\begin{example}[Sample-path discrepancy]\label{ex:L2_sample_path}
Let $\mc{D}$ be defined by
\begin{equation*}
\mc{D}(\vx,\vy) := \|\vx - \vy\|,
\end{equation*}
where $\|\cdot\|$ can be chosen as any norm on $\mR^d$.  Then we may define the ambiguity set under sample-path discrepancy as follows:
\begin{equation*}
\mc{B} =
\left\{
\htB :
\htB \text{ is } \mR^d\text{-valued, } \mb{F}^{\vS}\text{-progressively measurable, }
\|\htB_t(\omega) - \vbeta_t(\omega)\| \le \epsilon(t),\ \forall \omega \in \Omega,\, t \in [0,T]
\right\}.
\end{equation*}
\end{example}

\begin{example}[Multiple discrepancy constraints]\label{ex:multi_constraints}
Let $\mc{D}_i, \, i=1,2,\cdots,n$ be several discrepancy functionals, for example, the $L^p$-discrepancy or the p-Wasserstein distance defined above. Then we may define the ambiguity set under multiple discrepancy constraints by
\begin{equation*}
\mc{B} =
\left\{
\htB :
\htB_t = \htB(t,\vY_t),
\mc{D}_i(\htB_t,\vbeta_t) \le \epsilon_{i}(t), \, i=1,2,\cdots,n, \, \forall t \in [0,T]
\right\}.
\end{equation*}
\end{example}

\subsection{Dynamics of the Stock Price}

We note that, even when ambiguity is incorporated into the Bayesian estimation framework, the investor’s available information is still generated solely by observations of the stock price process $\vS$. Therefore, it is necessary to characterize the dynamic of $\vS$ under the ambiguity formulation in a way that preserves the realized price path. This can be achieved through an appropriate Girsanov transformation.

By Girsanov's theorem (see, e.g., \cite{karatzas1998brownian}), consider the process $\vW^{\htB}:=\{\vW^{\htB}_t\}_{t\in [0,T]}$ given by
\begin{equation*}
W^{\htB,i}_t = W_t^i + \int_0^t \frac{b^i - \htBi_s}{\sigma^i} \,\mathrm{d} s = \htW_t^i + \int_0^t \frac{\beta_t^i - \htBi_s}{\sigma^i} \,\mathrm{d} s, \quad \forall i \in \{1,\cdots,d\}, \ \forall t \in [0,T],
\end{equation*}
There exists a corresponding measure $\mb{P}^{\htB}$ such that the process $\{\vW^{\htB}_t, \mc{F}^{S}_t\}_{t \in [0,T]}$ is a Brownian motion under $(\Omega, \mb{F}^{S},\mb{P}^{\htB})$, with the same correlation structure as $\{ \vW_t \}_{t \in [0,T]}$.

Then, the dynamic of the stock price, under the ambiguity formulation in the Bayesian estimation, can be characterized through a change of probability measures as follows:
\begin{equation*}
  \begin{aligned}
      \d \vS_t
 & = \diag{(\vS_t)} \lt( \vbeta_t  \,\mathrm{d} t + \vsigma \odot \d \widehat{\vW}_t \rt),
  \qquad \mP \text{-a.s.} ,\\
  & = \diag{(\vS_t)} \lt( \htB_t \,\mathrm{d} t + \vsigma \odot \d \htwb_t \rt),
  \qquad \mP^{\htB} \text{-a.s.} ,
  \end{aligned}
\end{equation*}
which provides an equivalent probabilistic representation of model uncertainty while preserving the observable informational structure generated by the observed stock price process.

\subsection{Robust Wealth Dynamics and Optimization Problem}

With the robust Bayesian framework established above, we now formulate the investor’s optimal control problem. We first introduce the class of admissible trading strategies, which are required to be adapted to the observable filtration. Let the control strategy ${\valpha}=\{ {\valpha}_t \}_{t\in[0,T]}$ be an $\mR^d$-valued process, where the $i$-th component $\alpha_t^i$ represents the amount of wealth invested in the risky asset $i$ at time $t$.  
We define the admissible strategy set as
\begin{equation*}
  \mathcal{A}
  :=
  \left\{
    {\valpha} :
    {\valpha} \text{ is } \mb{F}^\vS\text{-progressively measurable and }
    \mathbb{E}\!\int_0^T \|{\valpha}_t\|^2 dt < \infty
  \right\}.
\end{equation*}
Given an initial capital $x_0>0$, under a candidate relaxed Bayesian estimator $\htB$, the corresponding wealth process $X^{\valpha}=\{ X_t^{\valpha} \}_{t\in[0,T]}$ evolves according to
\begin{equation}\label{eq:wealth_dynamic}
\left\{
  \begin{aligned}
      \d X_t^{\valpha}
 & = r X_t^{\valpha} \d t + {\valpha}_t^\top \lt( (\vbeta_t - r \vec{1}) \,\mathrm{d} t + \vsigma \odot \d \widehat{\vW}_t\rt),
  \qquad \mP \text{-a.s.} ,\\
  & = r X_t^{\valpha} \d t + {\valpha}_t^\top \lt( (\htB_t - r \vec{1}) \,\mathrm{d} t + \vsigma \odot \d \htwb_t\rt),
  \qquad \mP^{\htB} \text{-a.s.} , \\
  \qquad
  X_0^{\valpha} & = x_0.
  \end{aligned}
\right.
\end{equation}
For clarity, we also denote the state process $X_t^{\valpha}$ under probability $\mP^{\htB}$ as $X_t^{{\valpha},\htB}$.

Based on these dynamics, the investor seeks to maximize the expected utility of the terminal wealth while considering the worst case within the ambiguity set defined above. This leads naturally to the robust Bayesian optimization problem~\eqref{eq:robust_mpr_objective} as follows, which is a stochastic robust control problem, and the optimal control is chosen against the worst-case relaxed Bayesian estimator:


\begin{equation} \label{eq:robust_mpr_objective}
\begin{aligned}
    &V_0  := 
\sup_{{\valpha} \in \mathcal{A}}
\;
\inf_{ \htB \in \mc{B}}
\; J({\valpha}, \htB) = \sup_{{\valpha} \in \mathcal{A}}
\;
\inf_{ \htB \in \mc{B}}
\;  \mathbb{E}^{\mP^{\htB}} \left[
U\!\left(X_T^{\valpha}\right)
\right], \\
&J({\valpha}, \htB) :=
\mathbb{E}^{\mP^{\htB}}
\left[
U\!\left(X_T^{\valpha}\right)
\right],
\end{aligned}
\end{equation}
where $U : \mR \to \mR$ is chosen as the CARA utility $U(x)=-e^{-\gamma x}, \, \gamma>0$.
It is important to note that the optimization in \eqref{eq:robust_mpr_objective} is not a pointwise max-min problem. Both the portfolio strategy $\valpha$ and the relaxed Bayesian estimator $\htB$ are stochastic processes, so the supremum and infimum are taken over infinite-dimensional admissible classes; in particular, a pointwise worst-case choice need not directly define an admissible relaxed Bayesian estimator process. This point will be discussed in more detail in Section~\ref{subsec:HJBI_construction}.

\begin{remark}
In this paper, we primarily adopt the CARA utility to ensure that the associated HJBI equation admits a classical solution, and to present the robust Bayesian structure and the corresponding feedback controls in a relatively clear form.

For other utility specifications, such as the CRRA utility, much of the subsequent analytical framework is expected to remain applicable, although additional technical difficulties arise. Specifically, the construction of the HJBI equation and a similar reduction procedure can still be obtained through suitable structural transformations. However, in such cases, the resulting equation typically admits only a viscosity solution rather than a classical solution. Consequently, the classic verification arguments no longer apply directly. Moreover, under the viscosity framework, first-order derivatives or generalized subdifferentials are not necessarily uniquely defined, making the corresponding optimal feedback strategy potentially ill-posed. 
\end{remark}

\begin{remark}\label{rmk:robust_measure_clarify}
Under the robust formulation, the state process $X$ and the control ${\valpha}$ appear to depend on the choice of the relaxed Bayesian estimator $\htB$. However, the change of drift is implemented through a change of probability measure. More precisely, different choices of $\htB$ correspond to different equivalent probability measures on the same canonical space. 
Therefore, the underlying sample paths of $X$ and ${\valpha}$ remain unchanged. In particular, the dynamics are modified in law, but not in a pathwise sense.
\end{remark}

\begin{remark}\label{rmk:initial_setup}
    As mentioned above, without loss of generality, we assume that $r=0$. Otherwise, the case with a nonzero interest rate can be reduced to $r=0$ by applying suitable linear transformations to drift $\vb$, Bayesian estimator $\vbeta$, and the wealth process $X^{\valpha}$.
    Then, the dynamic of the state process reduces to
    \begin{equation}\label{eq:state_process_r_is_0}
  \begin{aligned}
      \d X_t^{\valpha}
 & = {\valpha}_t^\top \lt( \vbeta_t \,\mathrm{d} t + \vsigma \odot \d \widehat{\vW}_t\rt),
  \qquad \mP \text{-a.s.}, \\
  & = {\valpha}_t^\top \lt( \htB_t  \,\mathrm{d} t + \vsigma \odot \d \htwb_t\rt),
  \qquad \mP^{\htB} \text{-a.s.} .
  \end{aligned}
\end{equation}
And the dynamic of $\vY$ becomes
\begin{equation*}
    \d \vY_t = \lt(\htB_t - \frac{\vsigma \odot \vsigma}{2}\rt)\,\mathrm{d} t + \vsigma \odot \d \htwb_t.
\end{equation*}
Similarly, without loss of generality, we assume the initial price of the stocks as $\vS_0 =\vec{1}$, and then have $\vY_0= \vec{0}$.
\end{remark}


\section{Hamilton–Jacobi–Bellman–Isaacs Equations}\label{sec:HJB}

In this section, based on the martingale optimality principle, we derive the Hamilton--Jacobi--Bellman--Isaacs (HJBI) equation associated with the robust Bayesian optimization problem. To address the structural complexity of the original problem, we introduce a modified HJBI equation, which provides a clearer characterization while preserving the essential features of the robust control framework.

\subsection{Martingale Optimality Principle}\label{subsec:mart_opt}

First, we develop the martingale optimality principle for the robust Bayesian optimization problem. This principle plays a role similar to the dynamic programming principle in classic stochastic control, providing the main tool for characterizing the value function and the optimal strategy. We now present the martingale optimality principle as follows.

\begin{theorem}[Martingale optimality principle]\label{thm:mart_opt_principle}

Let $\{V^{{\valpha},\htB}: {\valpha} \in \mathcal{A},\, \htB \in \mc{B}\}$ be a family of stochastic processes in the form
$V_t^{{\valpha},\htB} = v(t,X_t^{{\valpha},\htB},\vY_t)$
for some real-valued measurable function $v$ on $[0,T]\times \mR \times \mR^d$ satisfying
\begin{itemize}
\item[(i)]
\begin{equation*}
v(T,x,\vy) = U(x), \qquad \forall \, (x,\vy)\in \mR \times \mR^d.
\end{equation*}

\item[(ii)]
The process $\lt\{V_t^{{\valpha},\htB^*}\rt\}_{t \in [0,T]}$  is a supermartingale under $\mb{P}^{\htB^*}$ for all ${\valpha} \in \mc{A}$ and some $\htB^* \in \mc{B}$.

\item[(iii)]
The process $\lt\{V_t^{{\valpha}^*,\htB}\rt\}_{t \in [0,T]}$  is a submartingale under $\mb{P}^{\htB}$ for some ${\valpha}^* \in \mc{A}$ and all $\htB \in \mc{B}$.
\end{itemize}
Then, $({\valpha}^*,\htB^*)$ is an optimizer for the robust Bayesian problem \eqref{eq:robust_mpr_objective} with optimal value function
\begin{equation}\label{eq:thm1-2}
V_0 = V(0,x_0,0) =\sup_{{\valpha} \in \mathcal{A}} \inf_{\htB \in \mc{B}} J({\valpha}, \htB)= \inf_{\htB \in \mc{B}} \sup_{{\valpha} \in \mathcal{A}} J({\valpha}, \htB) = J({\valpha}^*,\htB^*)=\E^{\mP^{\htB^*}}\lt[V_T^{{\valpha}^*,\htB^*}\rt].
\end{equation}
\end{theorem}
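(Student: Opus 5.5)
The argument is the classical saddle-point form of the martingale optimality principle. We compare the common initial value $v(0,x_0,\vec 0)$ with expected terminal utilities under the two one-sided martingale properties. Throughout, $X_0^{\valpha}=x_0$ and $\vY_0=\vec 0$ for every $\valpha$ (Remark~\ref{rmk:initial_setup}). Hence $V_0^{\valpha,\htB}=v(0,x_0,\vec 0)$ is deterministic and independent of $(\valpha,\htB)$, and all measures $\mP^{\htB}$ agree on $\mathcal F_0^{\vS}$, which is trivial. By (i), $V_T^{\valpha,\htB}=U(X_T^{\valpha})$ pathwise. Here Remark~\ref{rmk:robust_measure_clarify} is used: the path of $X^{\valpha}$ does not depend on $\htB$, only its law does. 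Therefore
\[
\E^{\mP^{\htB}}\bigl[V_T^{\valpha,\htB}\bigr]=J(\valpha,\htB).
\]

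\textbf{Step 1 (upper bound via $\htB^*$).} Fix any $\valpha\in\mathcal A$. By (ii), $V^{\valpha,\htB^*}$ is a $\mP^{\htB^*}$-supermartingale, so
\[
J(\valpha,\htB^*)=\E^{\mP^{\htB^*}}\bigl[V_T^{\valpha,\htB^*}\bigr]\le V_0^{\valpha,\htB^*}=v(0,x_0,\vec 0).
\]
Taking the supremum over $\valpha$ gives
\[
\inf_{\htB\in\mc B}\sup_{\valpha}J\le\sup_{\valpha}J(\valpha,\htB^*)\le v(0,x_0,\vec 0).
\]

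\textbf{Step 2 (lower bound via $\valpha^*$).} Fix any $\htB\in\mc B$. By (iii), $V^{\valpha^*,\htB}$ is a $\mP^{\htB}$-submartingale, so $J(\valpha^*,\htB)\ge v(0,x_0,\vec 0)$. Taking the infimum over $\htB$ gives
\[
\sup_{\valpha}\inf_{\htB}J\ge\inf_{\htB}J(\valpha^*,\htB)\ge v(0,x_0,\vec 0).
\]

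\textbf{Step 3 (closing the chain).} Combine these bounds with the elementary inequality $\sup\inf\le\inf\sup$:
\[
v(0,x_0,\vec 0)\le\inf_{\htB}J(\valpha^*,\htB)\le\sup_{\valpha}\inf_{\htB}J\le\inf_{\htB}\sup_{\valpha}J\le\sup_{\valpha}J(\valpha,\htB^*)\le v(0,x_0,\vec 0).
\]
All terms are therefore equal. Moreover, $J(\valpha^*,\htB^*)$ lies between $\inf_{\htB}J(\valpha^*,\htB)$ and $\sup_{\valpha}J(\valpha,\htB^*)$, so it equals the common value. This shows that $(\valpha^*,\htB^*)$ is a saddle point, and hence an optimizer for \eqref{eq:robust_mpr_objective}. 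Finally, the identity $J(\valpha^*,\htB^*)=\E^{\mP^{\htB^*}}[V_T^{\valpha^*,\htB^*}]$ is just (i) again. Identifying $V(0,x_0,0)$ with $v(0,x_0,\vec 0)$ yields \eqref{eq:thm1-2}.

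\textbf{Main obstacle.} The main point needing care is that the expectations are well defined and the super/submartingale inequalities can be applied between times $0$ and $T$. This is implicit in hypotheses (ii)--(iii), which presuppose integrability of $V_t$ under the relevant measure. Since $U<0$, the quantity $J$ takes values in $[-\infty,0)$, and the inequalities remain meaningful even when $J=-\infty$. A second subtlety is making sure $V^{\valpha,\htB}$ is the same process regardless of the measure, so that the terminal identification holds under each $\mP^{\htB}$. I would state this explicitly using the pathwise invariance of $X^{\valpha}$ and $\vY$ under the Girsanov change of measure.
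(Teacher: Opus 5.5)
Your proposal is correct and follows the paper's proof essentially verbatim. Condition (ii) under $\mP^{\htB^*}$ gives the upper bound, condition (iii) for $\valpha^*$ gives the lower bound, and $\sup\inf\le\inf\sup$ closes the chain. You also make explicit a step the paper leaves implicit: you sandwich $J(\valpha^*,\htB^*)$ between $\inf_{\htB}J(\valpha^*,\htB)$ and $\sup_{\valpha}J(\valpha,\htB^*)$, which is what yields the equality $J(\valpha^*,\htB^*)=V_0$ in \eqref{eq:thm1-2}.
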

\begin{proof}
   Observing that $V_0^{{\valpha},\htB} = v(0,x_0,0)$ for all ${\valpha} \in \mc{A}$ and $\htB \in \mc{B}$ and using Condition (i), we have $\E^{\mpB}\lt[V_T^{{\valpha},\htB}\rt] = J({\valpha}, \htB)$. Then, using Condition (ii), we obtain that, for some $\htB^* \in \mc{B}$,
   \begin{equation*}
       v(0,x_0,0) = \E^{\mb{P}^{\htB^*}}\lt[V_0^{{\valpha},\htB^*}\rt] \ge \E^{\mb{P}^{\htB^*}}\lt[V_T^{{\valpha},\htB^*}\rt] = J({\valpha},\htB^*), \; \forall {\valpha} \in \mc{A}.
   \end{equation*}
   Then
   \begin{equation*}
       v(0,x_0,0) \ge \sup_{{\valpha} \in \mc{A}}J({\valpha},\htB^*) \ge \inf_{\htB \in \mc{B}} \sup_{{\valpha} \in \mc{A}} J({\valpha},\htB).
   \end{equation*}
Similarly, based on  Condition (iii), we  obtain  $v(0,x_0,0) \le J({\valpha}^*,\htB), \; \forall \htB \in \mc{B}$. Therefore
   \begin{equation*}
       v(0,x_0,0) \le \inf_{\htB \in \mc{B}}J({\valpha}^*,\htB) \le \sup_{{\valpha} \in \mc{A}}\inf_{\htB \in \mc{B}}J({\valpha},\htB) = V_0.
   \end{equation*}
Meanwhile, the inequality
   \begin{equation*}
       \inf_{\htB \in \mc{B}} \sup_{{\valpha} \in \mc{A}} J({\valpha},\htB) \ge \sup_{{\valpha} \in \mc{A}}\inf_{\htB \in \mc{B}}J({\valpha},\htB)
   \end{equation*}
   always holds. Then, we obtain the required results by combining all above inequalities.
\end{proof}

Then, we derive the Hamiltonian.
In order to construct a process $V_t^{{\valpha},\htB} = v(t,X_t^{{\valpha},\htB},\vY_t)$ satisfying Conditions (i)--(iii) in Theorem~\ref{thm:mart_opt_principle}, we shall rely on Itô's formula. Assume that $v(t,x,\vy)$ is sufficiently smooth on $[0,T] \times \mR \times \mR^d$. For notational simplicity, write $X_t = X_t^{{\valpha},\htB}$. Then, applying Itô's formula to $v(t,X_t,\vY_t)$ and using the dynamics in Remark~\ref{rmk:initial_setup}, we have

\begin{equation}\label{eq:ito_results}
\begin{aligned}
\d v(t,X_t,\vY_t)
=&\Bigg[
\partial_t v(t,X_t,\vY_t)
+ \sum_{i=1}^d \partial_{x}v(t,X_t,\vY_t)\,\alpha_t^i \htBi_t
+ \sum_{i=1}^d \partial_{y_i}v(t,X_t,\vY_t)
\Big(\htBi_t-\tfrac12(\sigma^i)^2\Big)
\\[4pt]
&\quad
+ \tfrac12
\sum_{i,j=1}^d
\partial_{x x}^2 v(t,X_t,\vY_t)\,
\alpha_t^i \alpha_t^j\,\Sigma^{ij}
+ \tfrac12
\sum_{i,j=1}^d
\partial_{y_i y_j}^2 v(t,X_t,\vY_t)\,
\Sigma^{ij}
\\[4pt]
&\quad
+ \sum_{i,j=1}^d
\partial_{x y_j}^2 v(t,X_t,\vY_t)\,
\alpha_t^i\,\Sigma^{ij}
\Bigg]\d t
\\[6pt]
&\ + \sum_{i=1}^d
\Big(
\partial_{x}v(t,X_t,\vY_t)\,\alpha_t^i\sigma^i
+ \partial_{y_i}v(t,X_t,\vY_t)\,\sigma^i
\Big)\d W^{\htB,i}_t.
\end{aligned}
\end{equation}
The coefficient of the $\d t$ term in \eqref{eq:ito_results}, evaluated at $x,\vy$, is of the form
\begin{equation*}
    \partial_t v(t,x,\vy) + H(t,x,\vy,{\valpha}_t,\htB_t),
\end{equation*}
where $H(t,x,\vy,{\valpha}_t,\htB_t)$ is defined on $[0,T] \times \mR \times \mR^d \times \mR^d \times \mR^d $ by
\begin{equation*}
    \begin{aligned}
        H(t,x,\vy,{\valpha}_t,\htB_t) = &
\sum_{i=1}^d \partial_{x}v(t,x,\vy)\,\alpha_t^i \htBi_t
\!+\!\! \sum_{i=1}^d \partial_{y_i}v(t,x,\vy)
\big(\htBi_t\!-\!\tfrac12(\sigma^i)^2\big)
\!\!+ \tfrac12
\sum_{i,j=1}^d
\partial_{x x}^2 v(t,x,\vy)\,
\alpha_t^i \alpha_t^j\,\Sigma^{ij} \\
&+ \tfrac12
\sum_{i,j=1}^d
\partial_{y_i y_j}^2 v(t,x,\vy)\,
\Sigma^{ij}
+ \sum_{i,j=1}^d
\partial_{x y_j}^2 v(t,x,\vy)\,
\alpha_t^i\,\Sigma^{ij}.
    \end{aligned}
\end{equation*}
We rewrite the Hamiltonian in a vector form as follows:
\begin{equation}\label{eq:Hamiltonian_vector}
H(t,x,\vy,{\valpha}_t,\htB_t)
=
\frac12\,{\valpha}_t^\top A(t,x,\vy)\,{\valpha}_t
+ \vK(t,x,\vy,\htB_t)^\top {\valpha}_t
+ C(t,x,\vy,\htB_t),
\end{equation}
where the matrix function $A$, the vector function $\vK$, and the scalar function $C$ are given componentwise by
\begin{equation*}
\begin{aligned}
& A_{ij}(t,x,\vy):= \partial_{x x}^2 v(t,x,\vy)\,\Sigma^{ij},
\\
&K_i(t,x,\vy,\htB_t) := \partial_{x}v(t,x,\vy)\,\htBi_t
+ \sum_{j=1}^d \partial_{x y_j}^2 v(t,x,\vy)\,\Sigma^{ij},
\\&
C(t,x,\vy,\htB_t):= \sum_{i=1}^d
\partial_{y_i}v(t,x,\vy)
\Big(\htBi_t-\tfrac12(\sigma^i)^2\Big)
+ \tfrac12
\sum_{i,j=1}^d
\partial_{y_i y_j}^2 v(t,x,\vy)\,\Sigma^{ij}.
\end{aligned}
\end{equation*}

\begin{remark}
    Noting that the forms of $A(t,x,\vy), \, \vK(t,x,\vy,\htB_t), \, C(t,x,\vy,\htB_t)$ and the Hamiltonian $H$ all depend on the function $v$, we would append a superscript $v$ when this dependence needs to be emphasized.
\end{remark}

\subsection{Construction of HJBI Equations}\label{subsec:HJBI_construction}

This subsection constructs the HJBI equation corresponding to the robust Bayesian optimization problem as follows.

\subsubsection{Primal HJBI Equation}

At the start, we derive the primal form of the HJBI equation, which is the most direct formulation obtained from the martingale optimality principle. It follows the standard dynamic-programming intuition and reflects, in a formal way, the structure of the original robust Bayesian optimization problem:
\begin{equation}\tag{HJBI-P}\label{eq:primal_HJBI_equation}
    \begin{cases}
        & \pt_t v(t,x,\vy) + \inf\limits_{\htB \in \mc{B}} \sup\limits_{{\valpha} \in \mc{A}}  \E^{\mpB}[ H^v(t,x,\vy,{\valpha}_t,\htB_t) | X_t = x, \vY_t = \vy] = 0,  \\
        & \hfill (t,x,\vy)\in [0,T)\times \mR \times \mR^d, \\
        & v(T,x,\vy)=U(x),   \hfill  (x,\vy)\in \mR \times \mR^d.
    \end{cases}
\end{equation}
However, although this formulation may yield a function $v$ satisfying the martingale optimality principle at a formal way, it also reveals a fundamental obstacle caused by time inconsistency. More precisely, for each fixed $s \in [0,T)$, the optimization in the equation at time $s$ selects, if it exists, an optimal relaxed Bayesian estimator process (or possibly a family of optimal relaxed Bayesian estimator processes) $\htB^*(s) = \{\htB^*_t(s) \}_{t\in [0,T]} \in \mc{B}$ determined from the perspective of time $s$ but acts on the entire horizon $[0,T]$. Hence these optimal relaxed Bayesian estimator processes are global objects associated with different base times, rather than the time sections of a single admissible process. Consequently, there is in general no reason for the collection $\{\htB^*(s): s\in[0,T)\}$ to be mutually consistent in time, and it cannot be aggregated into a unified optimal relaxed Bayesian estimator $\htB^* = \{\htB^*_t \}_{t\in [0,T]} \in \mc{B}$ satisfying Condition~(ii) in Theorem~\ref{thm:mart_opt_principle}.


\subsubsection{Modified HJBI Equation}
To overcome time inconsistency occured in \eqref{eq:primal_HJBI_equation}, we introduce a feedback-type ambiguity set. This formulation restores time consistency at the level of admissible controls and leads to a modified HJBI equation, from which both the value function $v$ and the associated optimizers can be derived so as to satisfy Conditions~(i)--(iii) in Theorem~\ref{thm:mart_opt_principle}.

We consider a feedback-type ambiguity formulation: for each $(t,x,\vy) \in [0,T]\times \mR \times \mR^d$, we define the feedback-type ambiguity set at time $t$ under the state condition $(X_t = x, \vY_t = \vy)$ by
\begin{equation}\label{eq:feedback_B}
\mc{B}(t,\vy)  :=
\left\{
\vb \in \mR^d : \mc{D}(\vb,\vvarphi(t,\vy)) \le \epsilon(t)
\right\},
\end{equation}
where, by a slight abuse of notation, $\mc{D}(\cdot,\cdot)$ also denotes the restriction of the discrepancy functional in \eqref{eq:bayesian_ambiguity_set_general} on two constant-valued random variables. More precisely, for $\vx,\vy \in \mR^d$, $\mc{D}(\vx,\vy)$ is defined as the discrepancy between the two constant-valued random variables taking values $\vx$ and $\vy$, respectively.
We can see that this feedback-type ambiguity set \eqref{eq:feedback_B} is independent with $X_t = x$, as all information can be derived from the stock price $\vS_t$, and equivalently from $\vY_t$. Actually, the ambiguity set \eqref{eq:bayesian_ambiguity_set_general} can be reformulated as follows:
\begin{equation*}
\mc{B}:=
\left\{ \htB = (\htB_t)_{t \in [0,T]}:
\begin{array}{l}
\htB_t = \htB(t,\vY_t),\forall t\in [0,T], \\
\htB : [0,T]\times \mR^d \to \mR^d \text{ is Borel measurable}, \\
\htB(t,\vy) \in \mc{B}(t,\vy), \ \forall (t,\vy)\in [0,T]\times \mR^d
\end{array}
\right\}.
\end{equation*}
This leads to the following Hamilton–Jacobi–Bellman–Isaacs (HJBI) equation:

\begin{equation}\tag{HJBI-M}\label{eq:modified_HJBI_equation}
    \begin{cases}
        & \pt_t v(t,x,\vy) + \inf\limits_{\htB_t \in \mc{B}(t,\vy)} \sup\limits_{{\valpha}_t \in \mR^d} H^v(t,x,\vy,{\valpha}_t,\htB_t) = 0, \\ & \hfill  (t,x,\vy)\in [0,T)\times \mR \times \mR^d, \\
        & v(T,x,\vy)=U(x),   \hfill  (x,\vy)\in \mR \times \mR^d.
    \end{cases}
    \end{equation}

\begin{remark}
An interesting feature is that, although the infimum and supremum operators can be exchanged in Theorem~\ref{thm:mart_opt_principle}, such a max-min interchange does not generally remain valid in the HJBI equation. In particular, the classic max-min theorem fails for the HJBI formulation, and only the min-max structure yields a solvable equation suitable for further analysis. Nevertheless, despite this asymmetry, the resulting modified HJBI equation remains sufficient for establishing the verification theorem~\ref{thm:verification} and fully characterizing the value function and the optimal feedback strategy.
\end{remark}

\subsection{Semi-explicit Solution}\label{subsec:semi_explicit_solution_CARA}

In this subsection, we establish that the modified equation~\eqref{eq:modified_HJBI_equation} admits a classical solution with a semi-explicit representation, together with the form of the corresponding optimal feedback strategy. These results reveal several properties that are essential in proving the verification theorem in Section~\ref{sec:verification}, and connect the feedback optimizers of the modified HJBI equation with the optimizers of the original robust Bayesian optimization problem.

First, the following lemma allows us to derive an optimizer $\tilde{\valpha}$ for the inner supremum of \eqref{eq:modified_HJBI_equation}.

\begin{lemma}\label{lem:Hamilton_minmax_optimizer}
For all $(t,x,\vy) \in [0,T) \times \mR \times \mR^d$, we assume that $\partial_{x x}^2 v(t,x,\vy)<0$. Then for any $\htB_t \in \mc{B}(t,\vy)$, we have
\begin{equation*}
   H_1(t,x,\vy,\htB_t) := \sup_{{\valpha}_t \in \mR^d} H(t,x,\vy,{\valpha}_t,\htB_t) = H(t,x,\vy,\tilde{{\valpha}}_t,\htB_t) > -\infty 
\end{equation*}
with $\tilde{{\valpha}}_t = \tilde{{\valpha}}_t (t,x,\vy,\htB_t)
= - A(t,x,\vy)^{-1} \vK(t,x,\vy,\htB_t)$.


\end{lemma}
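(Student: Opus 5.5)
The plan is to treat the claim as maximizing a strictly concave quadratic form. By \eqref{eq:Hamiltonian_vector}, for fixed $(t,x,\vy)$ and fixed $\htB_t\in\mc{B}(t,\vy)$, the map $\valpha\mapsto H(t,x,\vy,\valpha,\htB_t)$ is the quadratic function $\frac12\valpha^\top A\valpha+\vK^\top\valpha+C$ on $\mR^d$. Its coefficients $A=A(t,x,\vy)$, $\vK=\vK(t,x,\vy,\htB_t)$ and $C=C(t,x,\vy,\htB_t)$ are finite real numbers, since $v$ is assumed smooth and $\htB_t$ is a fixed vector. The whole argument therefore reduces to showing that $A$ is negative definite.

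\textbf{Step 1 (negative definiteness of $A$).} We have $A=\partial_{xx}^2v(t,x,\vy)\,\Sigma$ with $\Sigma=\diag(\vsigma)\,\rho\,\diag(\vsigma)$. Here $\rho$ is the correlation matrix of the Brownian motion $\vW$, so it is symmetric positive semidefinite. I will need $\Sigma$ to be positive definite. This is implicit in the paper already, because $\Sigma^{-1}$ appears in \eqref{eq:varphi_formation}; so $\rho$ is nondegenerate. Since every $\sigma^i>0$, $\diag(\vsigma)$ is invertible, and hence $\Sigma\succ0$. Combined with the hypothesis $\partial_{xx}^2v<0$, this gives $A\prec0$, and in particular $A$ is invertible.

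\textbf{Step 2 (completing the square).} Set $\tilde\valpha=-A^{-1}\vK$. Then
\begin{equation*}
H(t,x,\vy,\valpha,\htB_t)=\tfrac12(\valpha-\tilde\valpha)^\top A(\valpha-\tilde\valpha)-\tfrac12\vK^\top A^{-1}\vK+C .
\end{equation*}
Because $A\prec0$, the first term is at most $0$, with equality if and only if $\valpha=\tilde\valpha$. Hence the supremum over $\mR^d$ is attained, uniquely, at $\tilde\valpha$. Its value $H_1=C-\frac12\vK^\top A^{-1}\vK$ is a finite real number, which gives $H_1>-\infty$.

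There is no real obstacle. The only point that needs justifying is that $\Sigma$ is invertible, i.e.\ that $\rho$ is nondegenerate, and I will state it explicitly as the standing assumption already used in \eqref{eq:varphi_formation}. Nothing about the ambiguity set is used beyond the fact that $\htB_t$ is a fixed point of $\mR^d$.
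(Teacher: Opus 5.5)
Your proposal is correct and takes the same approach as the paper: both complete the square in $\valpha$ using the negative definiteness of $A=\partial_{xx}^2 v\,\Sigma$, which yields the unique maximizer $\tilde\valpha=-A^{-1}\vK$ and the finite value $C-\frac12\vK^\top A^{-1}\vK$. Your explicit argument that $\Sigma\succ 0$ (from $\sigma^i>0$ and the nondegeneracy of $\rho$, which the paper already presupposes by using $\Sigma^{-1}$ in \eqref{eq:varphi_formation}) fills in a step the paper leaves implicit.
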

\begin{proof}
 As $\partial_{x x}^2 v(t,x,\vy)<0$, $A(t,x,\vy)$ is symmetric and negative definite. Completing the square yields
\begin{equation*}
\begin{aligned}
H(t,x,\vy,\valpha_t,\htB_t)
&=
\frac12
\big(\valpha_t + A(t,x,\vy)^{-1}\vK(t,x,\vy,\htB_t)\big)^\top
A(t,x,\vy)
\big(\valpha_t + A(t,x,\vy)^{-1}\vK(t,x,\vy,\htB_t)\big)
\\
&\quad
- \frac12\, \vK(t,x,\vy,\htB_t)^\top
A(t,x,\vy)^{-1}
\vK(t,x,\vy,\htB_t)
+ C(t,x,\vy,\htB_t).
\end{aligned}
\end{equation*}
For any fixed $(t,x,\vy,\htB_t)$, the Hamiltonian is strictly concave in ${\valpha}$, 
and the supremum over ${\valpha}$ can be computed explicitly. Consequently, the supremum over ${\valpha}_t$ is given by
\begin{equation*}
\begin{aligned}
H_1(t,x,\vy,\htB_t) =& \; \sup_{{\valpha}_t \in \mR^d} H(t,x,\vy,{\valpha}_t,\htB_t)\\
=&\;
-\frac12\, \vK(t,x,\vy,\htB_t)^\top
A(t,x,\vy)^{-1}
\vK(t,x,\vy,\htB_t) + C(t,x,\vy,\htB_t),
\end{aligned}
\end{equation*}
and the maximizer is explicitly given by
\begin{equation*}
\tilde{{\valpha}}_t = \tilde{{\valpha}}_t (t,x,\vy,\htB_t)
= - A(t,x,\vy)^{-1} \vK(t,x,\vy,\htB_t).
\end{equation*}
\end{proof}
\begin{remark}[Notational convention]
In this lemma, and in similar shorthand notation in this paper, the subscript $t$ in $\valpha_t$ and $\htB_t$ is used only to indicate that these quantities are stochastic processes evaluated at time $t$. The additional dependence on the time variable $t$ in expressions such as $\tilde{\valpha}_t(t,x,\vy,\htB_t)$ is kept for notational consistency and to make the feedback structure explicit. This convention is used only for stochastic processes and should not cause misunderstanding.
\end{remark}

Second, inspired by the close relation between the solution of the HJBI and the value function, we suppose that the function $v$ in the martingale optimality principle is of the form $v(t,x,\vy) = \Phi(t,\vy)U(x)$ and $ \Phi(t,\vy) > 0$. 
As the utility function $U$ is CARA, we obtain that $U \in C^2(\mR)$ is negative and strictly concave, and $(U'(x))^2 =  U(x)\,U''(x)$ for all $x \in \mR$.
As $ \Phi(t,\vy) > 0$, applying Lemma~\ref{lem:Hamilton_minmax_optimizer}, we have
\begin{equation*}
    \begin{aligned}
        H_1(t,x,\vy,\htB_t) :=& \sup_{{\valpha}_t \in \mR^d} H(t,x,\vy,{\valpha}_t,\htB_t) \\
         = & -\frac12\, \vK(t,x,\vy,\htB_t)^\top A(t,x,\vy)^{-1} \vK(t,x,\vy,\htB_t) + C(t,x,\vy,\htB_t) \\
= & -\frac12\, \lt(U'(x)\Phi(t,\vy)\htB_t +U'(x) \nabla_{\vy} \Phi(t,\vy)\Sigma \rt)^\top \lt(U''(x)\Phi(t,\vy)\Sigma \rt)^{-1} \\
 & \lt(U'(x)\Phi(t,\vy)\htB_t +U'(x) \nabla_{\vy} \Phi(t,\vy)\Sigma \rt) \\
&\; +\Big(U(x) \nabla_{\vy} \Phi(t,\vy)\Big)^\top \Big(\htB_t-\tfrac12(\vsigma \odot \vsigma)\Big) + \frac{1}{2} U(x) \operatorname{tr}\Big(\Sigma \nabla_{\vy}^2 \Phi(t,\vy)\Big) \\
= & \quad U(x)\Bigg[
-\frac{1}{2}\,\Phi(t,\vy)\,\htB_t^\top \Sigma^{-1}\htB_t
-\frac{1}{2 \Phi(t,\vy)}\,
\nabla_{\vy} \Phi(t,\vy)^\top\Sigma\nabla_{\vy} \Phi(t,\vy) \\
&\qquad
-\frac{1}{2}\,\nabla_{\vy} \Phi(t,\vy)^\top(\vsigma \odot \vsigma) 
+\frac{1}{2}\,\operatorname{tr}\!\big(\Sigma \nabla_{\vy}^2 \Phi(t,\vy)\big)
\Bigg].
    \end{aligned}
\end{equation*}
Then, by $U(x) \neq 0$, the HJBI equation~\eqref{eq:modified_HJBI_equation} reduces to the following HJB equation:
 \begin{equation}\label{eq:semi_explicit_HJB_equation}
    \begin{cases}
        & \pt_t \Phi(t,\vy) +\frac{1}{2}\,\operatorname{tr}(\Sigma \nabla_{\vy}^2 \Phi(t,\vy))
-\frac{1}{2 \Phi(t,\vy)}\,
\nabla_{\vy} \Phi(t,\vy)^\top\Sigma\nabla_{\vy} \Phi(t,\vy)
-\frac{1}{2}\,\nabla_{\vy} \Phi(t,\vy)^\top(\vsigma \odot \vsigma)
\\
& -  \inf\limits_{\htB_t \in \mc{B}(t,\vy)}   \Big\{ \frac{1}{2}\,\Phi(t,\vy)\,\htB_t^\top \Sigma^{-1}\htB_t \Big\}  = 0,  \, \qquad (t,\vy)\in [0,T) \times \mR^d, \\
        & \Phi(t,\vy) > 0, \qquad  (t,\vy)\in [0,T) \times \mR^d, \\
        & \Phi(T,\vy)=1,  \,  \qquad  \vy \in \mR^d.
    \end{cases}
\end{equation}

Finally, we present the existence theorem as follows, which establishes the existence of a classical solution to the reduced HJB equation, and consequently provides the existence of a classical solution to the modified HJBI equation~\eqref{eq:modified_HJBI_equation}.

\begin{theorem}\label{thm:existence_of_semisolution}
Suppose that the function
\begin{equation}\label{eq:gty_source_term}
        g(t,\vy) := \inf_{\htB_t \in \mc{B}(t,\vy)}   \Big\{ \frac{1}{2}\,\htB_t^\top \Sigma^{-1}\htB_t \Big\}
\end{equation}
belongs to $C_{loc}^{\alpha/2,\alpha}([0,T]\times \mathbb{R}^d)$ for some $\alpha \in (0,1)$, and that $g$ satisfies a polynomial growth condition: there exists some $m \in \mb{Z}_{+}$ such that
\begin{equation*}
    |g(t,\vy)| \le C(1+\nm{\vy}^m), \quad \forall (t,\vy)\in [0,T]\times \mathbb{R}^d.
\end{equation*}
Then, the HJB equation~\eqref{eq:semi_explicit_HJB_equation} admits at least one classical solution in $C^{1,2}([0,T]\times \mathbb{R}^d)$.
\end{theorem}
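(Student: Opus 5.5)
The plan is to linearize the reduced equation~\eqref{eq:semi_explicit_HJB_equation} by a logarithmic change of variables, solve the resulting linear parabolic Cauchy problem with a Feynman--Kac formula, and then undo the change of variables.

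\textbf{Step 1: remove the quadratic gradient term.} Since the infimum in~\eqref{eq:semi_explicit_HJB_equation} is multiplied by $\Phi>0$, it equals $\Phi(t,\vy)g(t,\vy)$. The equation is therefore
\begin{equation*}
\pt_t \Phi + \tfrac12 \operatorname{tr}(\Sigma\nabla^2_{\vy}\Phi) - \tfrac{1}{2\Phi}\nabla_{\vy}\Phi^\top\Sigma\nabla_{\vy}\Phi - \tfrac12 \nabla_{\vy}\Phi^\top(\vsigma\odot\vsigma) - g\Phi = 0 .
\end{equation*}
Try the power substitution $\Phi=\psi^{k}$ with $\psi>0$. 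Then
\begin{equation*}
\nabla\Phi=k\psi^{k-1}\nabla\psi, \qquad \nabla^2\Phi=k\psi^{k-1}\nabla^2\psi+k(k-1)\psi^{k-2}\nabla\psi\nabla\psi^\top .
\end{equation*}
The coefficient of the quadratic term $\psi^{k-2}\nabla\psi^\top\Sigma\nabla\psi$ becomes $\tfrac12k(k-1)-\tfrac12k^2=-\tfrac k2$. This does not vanish for any $k\neq 0$, so the power substitution fails.

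Instead use the exponential substitution $\Phi=e^{u}$. This gives
\begin{equation*}
\pt_t u+\tfrac12\operatorname{tr}(\Sigma\nabla^2u)+\tfrac12\nabla u^\top\Sigma\nabla u-\tfrac12\nabla u^\top\Sigma\nabla u-\tfrac12\nabla u^\top(\vsigma\odot\vsigma)-g=0 .
\end{equation*}
The two quadratic terms cancel exactly. What remains is the \emph{linear} equation
\begin{equation*}
\pt_t u+\tfrac12\operatorname{tr}(\Sigma\nabla^2_{\vy}u)-\tfrac12(\vsigma\odot\vsigma)^\top\nabla_{\vy}u=g, \qquad u(T,\cdot)=0,
\end{equation*}
with constant coefficients and a uniformly elliptic operator, since $\Sigma$ is positive definite.

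\textbf{Step 2: solve the linear problem.} Define
\begin{equation*}
u(t,\vy)=-\E\Big[\int_t^T g(s,\vZ_s^{t,\vy})\,\d s\Big],
\end{equation*}
where $\vZ$ is the Brownian motion with drift $-\tfrac12\vsigma\odot\vsigma$ and covariance $\Sigma$. Equivalently, $u$ is the Duhamel integral of $g$ against the Gaussian heat kernel. Because $g$ has polynomial growth and Gaussian moments of all orders are finite, $u$ is well defined and itself has polynomial growth.

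For regularity, use classical parabolic theory \citep{ladyzhenskaia1968linear,lieberman1996second}: volume potentials with locally H\"older continuous densities belong to $C^{1+\alpha/2,2+\alpha}_{loc}$. Apply this on bounded cylinders, using a cutoff argument. Split $g=\chi g+(1-\chi)g$ with $\chi$ compactly supported. The part of the potential generated by $(1-\chi)g$ is smooth near the support region because the kernel is smooth away from its singularity, and the Gaussian tails dominate the polynomial growth. Alternatively, invoke the standard existence and uniqueness theorem for Cauchy problems in the polynomial-growth (Tychonoff) class. Either route gives $u\in C^{1,2}([0,T]\times\mR^d)$, with continuity up to $t=T$ and $u(T,\cdot)=0$.

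\textbf{Step 3: conclude.} Set $\Phi=e^{u}$. Then $\Phi>0$ automatically, $\Phi(T,\cdot)=1$, and $\Phi\in C^{1,2}$. Reversing the computation of Step 1 shows that $\Phi$ solves~\eqref{eq:semi_explicit_HJB_equation}.

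The main obstacle is Step 2. The source $g$ is only locally H\"older and may grow polynomially, so global Schauder estimates do not apply directly. I would localize with the cutoff argument and control the tails with Gaussian kernel estimates to justify differentiating under the integral twice in $\vy$ and once in $t$. Regularity at the terminal time needs care, but the problem only requires $C^{1,2}$ on $[0,T)$ together with continuity up to $T$.
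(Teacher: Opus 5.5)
Your proposal is correct and follows the paper's proof. The substitution $\Phi=e^{u}$ (the paper's $F=\log\Phi$) cancels the quadratic gradient terms and yields exactly the reduced equation~\eqref{eq:reduced_semi_explicit_HJB_equation}, which both you and the paper then solve by classical linear parabolic theory; your Duhamel/Feynman--Kac representation, with a cutoff to handle the polynomially growing tails, just makes concrete the paper's appeal to Schauder estimates and approximation. The hedge in your last paragraph is unnecessary: for this whole-space Cauchy problem with zero terminal data and locally H\"older $g$, the potential's second spatial derivatives are $O((T-t)^{\alpha/2})$ locally uniformly as $t\to T$, so $u$ is $C^{1,2}$ up to $t=T$, as the statement requires.
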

\begin{proof}
    To solve \eqref{eq:semi_explicit_HJB_equation}, we introduce the logarithmic transformation
\begin{equation*}
    F(t,\vy):=\log \Phi(t,\vy).
\end{equation*}
Then the HJB equation~\eqref{eq:semi_explicit_HJB_equation} reduces to the following equation:
 \begin{equation}\label{eq:reduced_semi_explicit_HJB_equation}
    \begin{cases}
        & \pt_t F(t,\vy) +\frac{1}{2}\,\operatorname{tr}(\Sigma \nabla_{\vy}^2 F(t,\vy))
-\frac{1}{2}\,\nabla_{\vy} F(t,\vy)^\top(\vsigma \odot \vsigma)
 - g(t,\vy)  = 0,  \,  (t,\vy)\in [0,T) \times \mR^d, \\        
        & F(T,\vy)=0,   \quad  \vy \in \mR^d,
    \end{cases}
\end{equation}
where the source term is 
\begin{equation*}
   g(t,\vy) := \inf_{\htB_t \in \mc{B}(t,\vy)}   \Big\{ \frac{1}{2}\,\htB_t^\top \Sigma^{-1}\htB_t \Big\}.
\end{equation*}
As Equation~\eqref{eq:reduced_semi_explicit_HJB_equation} is a semilinear parabolic equation with uniformly elliptic second-order coefficients, its solvability follows from classical second-order parabolic PDE theory (see, e.g., \cite{ladyzhenskaia1968linear}, \cite{krylov1987nonlinear}, \cite{lieberman1996second}). In particular, under the local H\"older continuity and polynomial growth assumptions imposed on the source term, one may apply the standard Schauder interior and global regularity estimates for second-order parabolic equations, together with approximation and continuation arguments, to establish the existence of a classical solution.
Thus, Equation \eqref{eq:reduced_semi_explicit_HJB_equation} admits a solution $F \in C_{loc}^{1+\alpha/2,2+\alpha}([0,T]\times \mathbb{R}^d) \subset C^{1,2}([0,T]\times \mathbb{R}^d)$.
Thus, the HJB equation~\eqref{eq:semi_explicit_HJB_equation} admits a classical solution $\Phi = e^{F} \in C^{1,2}([0,T]\times \mathbb{R}^d)$.
\end{proof}

\begin{remark}
    From the perspective of diffusion dynamics, the second-order term describes the spread of heat (or uncertainty), the first-order term captures the directed transport, while $g$ represents the local generation or absorption of energy within the system. Hence, $g$ is naturally referred to as a resource (or source) term, as it quantifies the instantaneous input accumulated along the system's evolution.
\end{remark}


Meanwhile, as $g(t,\vy)$ admits an optimizer whenever $\mc{B}(t,\vy)$ is convex and closed, if the HJB equation~\eqref{eq:semi_explicit_HJB_equation} admits a classical solution $\Phi(t,\vy) \in C^{1,2}([0,T]\times \mathbb{R}^d)$, we can denote the corresponding feedback optimizer by
\begin{equation}\label{eq:optimal_feedback_B}
   \htB_t^{*,v}= \htB_t^{*,v}(t,\vy):= \arg \min_{\htB_t \in \mc{B}(t,\vy)}   \Big\{ \frac{1}{2}\,\Phi(t,\vy)\,\htB_t^\top \Sigma^{-1}\htB_t \Big\} = \Phi(t,\vy)\, \cdot \arg \min_{\htB_t \in \mc{B}(t,\vy)}   \Big\{ \frac{1}{2}\,\htB_t^\top \Sigma^{-1}\htB_t \Big\}.
\end{equation}
And the optimal feedback control is given by
\begin{equation}\label{eq:optimal_feedback_alpha}
    \begin{aligned}
        {\valpha}^{*,v}_t(t,x,\vy):=\tilde{{\valpha}}_t^{v} (t,x,\vy,\htB_t^{*,v}(t,\vy)) & = - \frac{U(x)}{U'(x)} \Big[ \Sigma^{-1}\htB_t^{*,v}(t,\vy) + \frac{\nabla_{\vy} \Phi(t,\vy)}{\Phi(t,\vy)} \Big]\\
        & = \frac{1}{\gamma} \Big[ \Sigma^{-1}\htB_t^{*,v}(t,\vy) + \frac{\nabla_{\vy} \Phi(t,\vy)}{\Phi(t,\vy)} \Big].
    \end{aligned}
\end{equation}
\section{Verification Theorem and Optimal Control}\label{sec:verification}
In this section we establish the verification theorem for the robust Bayesian optimization problem. Meanwhile, building on the modified HJBI equation and its semi-explicit classical solution obtained in last section, we verify that the optimizer of \eqref{eq:modified_HJBI_equation} indeed coincides with that of the original robust Bayesian optimization problem.

Before the proof of the verification theorem, we first establish the following auxiliary result, which follows directly from the convexity and the quadratic structure of the ambiguity minimization~\eqref{eq:gty_source_term}.
\begin{lemma}\label{lem:diff_condition}
    For any $\htB_t \in \mc{B}(t,\vy)$, we have $
 \Phi(t,\vy)\,(\htB_t - \htB^{*,v}_t)^\top \Sigma^{-1} \htB^{*,v}_t   \ge 0.$
\end{lemma}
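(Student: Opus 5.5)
The plan is to read the inequality as the first-order (variational) optimality condition for minimizing a convex quadratic over a convex closed set, and to derive it from a one-sided directional derivative. First I fix the meaning of $\htB^{*,v}_t$. Since $\Phi(t,\vy)>0$, the minimizers of $\frac12\Phi(t,\vy)\,\vb^\top\Sigma^{-1}\vb$ over $\mc{B}(t,\vy)$ coincide with the minimizers of $q(\vb):=\frac12\vb^\top\Sigma^{-1}\vb$. So I will take $\htB^{*,v}_t$ to be a minimizer of $q$ over $\mc{B}(t,\vy)$. The factor $\Phi$ in \eqref{eq:optimal_feedback_B} only rescales the objective, not the argmin, and I will read that display in this sense. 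Because $\Phi(t,\vy)>0$, it then suffices to prove $(\htB_t-\htB^{*,v}_t)^\top\Sigma^{-1}\htB^{*,v}_t\ge 0$ for all $\htB_t\in\mc{B}(t,\vy)$.

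Next I check the structural facts needed. The matrix $\Sigma=(\rho^{ij}\sigma^i\sigma^j)$ is symmetric, and it is positive definite since $\Sigma^{-1}$ is used throughout (nondegenerate correlation). Hence $q$ is strictly convex with gradient $\Sigma^{-1}\vb$. The set $\mc{B}(t,\vy)$ must be convex; this is the hypothesis under which the optimizer in \eqref{eq:optimal_feedback_B} was introduced. In the examples it holds because, on constant random variables, both the Wasserstein and the $L^p$ discrepancy reduce to $\|\vb-\vvarphi(t,\vy)\|$, so $\mc{B}(t,\vy)$ is a closed norm ball around $\vvarphi(t,\vy)$. The same holds for intersections of such balls in the multiple-constraint case.

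The core step is a perturbation argument. Fix $\htB_t\in\mc{B}(t,\vy)$ and, for $s\in(0,1]$, set $\vb_s:=\htB^{*,v}_t+s(\htB_t-\htB^{*,v}_t)$, which lies in $\mc{B}(t,\vy)$ by convexity. Minimality gives $q(\vb_s)-q(\htB^{*,v}_t)\ge0$. Expanding the quadratic and using the symmetry of $\Sigma^{-1}$,
\begin{equation*}
q(\vb_s)-q(\htB^{*,v}_t)= s\,(\htB_t-\htB^{*,v}_t)^\top\Sigma^{-1}\htB^{*,v}_t+\tfrac{s^2}{2}(\htB_t-\htB^{*,v}_t)^\top\Sigma^{-1}(\htB_t-\htB^{*,v}_t)\ge 0.
\end{equation*}
Dividing by $s>0$ and letting $s\downarrow0$ yields the claim; multiplying by $\Phi(t,\vy)>0$ gives the stated form.

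The only real obstacle is the interpretive one: the literal formula ``$\Phi\cdot\arg\min$'' in \eqref{eq:optimal_feedback_B}. If $\htB^{*,v}$ were the argmin multiplied by $\Phi$, it might not even lie in $\mc{B}(t,\vy)$, and the inequality could fail. I will therefore state explicitly that $\htB^{*,v}_t$ denotes the minimizer itself, so that $\htB^{*,v}_t\in\mc{B}(t,\vy)$. Existence of this minimizer follows from closedness of $\mc{B}(t,\vy)$ and coercivity of $q$.
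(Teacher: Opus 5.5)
Your proof is correct and takes the same route as the paper. The paper cites the first-order optimality condition $(\Sigma^{-1}\htB^{*,v}_t)^\top(\htB_t-\htB^{*,v}_t)\ge 0$ for a convex differentiable objective over the convex set $\mc{B}(t,\vy)$ and then multiplies by $\Phi>0$; your directional-derivative argument simply proves that condition directly. Your reading of $\htB^{*,v}_t$ as the minimizer itself, not $\Phi$ times it, is also what the paper's proof implicitly uses, so you are right to flag the ``$\Phi\cdot\arg\min$'' display as a notational slip.
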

\begin{proof}
As $\mathcal{B}(t,\vy)$ is a convex set, and the objective function $ J(\vb) := \frac{1}{2}\vb^\top \Sigma^{-1}\vb $
is a strictly convex and differentiable function with respect to $\htB_t$ (due to the positive definiteness of $\Sigma^{-1}$), the optimal solution $\htB_t^{*,v}$ is uniquely determined by the first-order optimality condition for convex optimization. Specifically, for any $\htB_t \in \mathcal{B}(t,\vy)$, we  have $    \nabla_{\vb} J(\htB_t^{*,v})^\top (\htB_t - \htB_t^{*,v}) \ge 0.$
Computing the gradient and applying that $\Phi>0$, we obtain the required inequality.
\end{proof}

Then, we can establish the verification theorem as follows.

\begin{theorem}[Verification theorem]\label{thm:verification}
    Let $v(t,x,\vy) \in C^{1,2,2}([0,T] \times \mR \times \mR^d)$ satisfying \  $\partial_{x x}^2 v(t,x,\vy) < 0$ \ for all $(t,x,\vy) \in [0,T)\times \mR \times \mR^d$, and suppose that $v$ is a solution to \eqref{eq:modified_HJBI_equation}:
    \begin{equation*}
    \begin{cases}
        & \pt_t v(t,x,\vy) + \inf\limits_{\htB_t \in \mc{B}(t,\vy)} \sup\limits_{{\valpha}_t \in \mR^d} H^v(t,x,\vy,{\valpha}_t,\htB_t) = 0,  \quad  (t,x,\vy)\in [0,T)\times \mR \times \mR^d, \\
        & v(T,x,\vy)=U(x),   \quad  (x,\vy)\in \mR \times \mR^d.
    \end{cases}
    \end{equation*} 
    Then, when the feedback-type ambiguity set $\mc{B}(t,\vy)$ in \eqref{eq:feedback_B} is closed and convex, the corresponding feedback control ${\valpha}^{*,v}_t(t,x,\vy):=\tilde{{\valpha}}_t^{v} (t,x,\vy,\htB_t^{*,v}(t,\vy)): [0,T] \times \mR \times \mR^d \longrightarrow \mR^d$ given by \eqref{eq:optimal_feedback_alpha} and the feedback-type relaxed Bayesian estimator $\htB_t^{*,v}(t,\vy)$ given by \eqref{eq:optimal_feedback_B} form an optimal pair for the robust Bayesian problem \eqref{eq:robust_mpr_objective}.
\end{theorem}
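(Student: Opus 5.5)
The plan is to verify Conditions (i)--(iii) of Theorem~\ref{thm:mart_opt_principle} for the family $V^{\valpha,\htB}_t=v(t,X^{\valpha,\htB}_t,\vY_t)$, with $\htB^*:=\htB^{*,v}(t,\vY_t)$ and $\valpha^*_t:=\valpha^{*,v}(t,X_t,\vY_t)$. For a general $v$, $\htB^{*,v}(t,\vy)$ is read as a minimizer over $\mc{B}(t,\vy)$ of $H_1(t,x,\vy,\cdot)$ from Lemma~\ref{lem:Hamilton_minmax_optimizer}, and $\valpha^{*,v}=\tilde\valpha(t,x,\vy,\htB^{*,v})$. Condition (i) is just the terminal condition of \eqref{eq:modified_HJBI_equation}. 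For (ii) and (iii) I would apply It\^o's formula \eqref{eq:ito_results} under $\mP^{\htB}$. The $\d t$-coefficient of $\d V_t$ is then $\partial_t v+H^v(t,X_t,\vY_t,\valpha_t,\htB_t)$, and the remainder is a $\d W^{\htB}$-stochastic integral. The sign of this drift is obtained pointwise in $(t,x,\vy)$, and the local martingale part is handled separately.

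\emph{Step 1 (structure of $H_1$ and the optimizers).} By Lemma~\ref{lem:Hamilton_minmax_optimizer}, since $A$ is negative definite,
\[
H_1(t,x,\vy,\vb)=-\tfrac12\vK(\vb)^\top A^{-1}\vK(\vb)+C(\vb),
\]
where $\vK$ and $C$ are affine in $\vb$. Hence $H_1$ is a convex, $C^1$ function of $\vb$; it is also a supremum of functions affine in $\vb$. By Danskin's theorem (or direct differentiation, using $\vK(\tilde\valpha)$-stationarity), its gradient is
\[
\nabla_{\vb}H_1(\vb)=\nabla_{\vb}H(\tilde\valpha(\vb),\vb)=\partial_x v\,\tilde\valpha(\vb)+\nabla_{\vy}v .
\]

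Since $\mc{B}(t,\vy)$ is closed and convex, the minimizer $\htB^{*,v}(t,\vy)$ exists whenever $H_1$ is coercive on the set or the set is bounded. This is the case in the examples, where $\mc{B}(t,\vy)$ is a ball of radius $\epsilon(t)$ around $\vvarphi(t,\vy)$. A measurable selection theorem, or uniqueness together with continuity in $(t,\vy)$, then makes $\htB^{*,v}$ Borel, so that $\htB^*\in\mc{B}$. The minimizer is characterised by the first-order condition
\[
(\vb-\htB^{*,v})^\top\bigl(\partial_x v\,\valpha^{*,v}+\nabla_{\vy}v\bigr)\ge 0\qquad\text{for all } \vb\in\mc{B}(t,\vy),
\]
which is the general analogue of Lemma~\ref{lem:diff_condition}.

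\emph{Step 2 (drift signs).}
\begin{itemize}
\item For (ii), fix $\htB^*$ and any $\valpha$. Then $\partial_t v+H(\valpha,\htB^{*,v})\le\partial_t v+H_1(\htB^{*,v})=\partial_t v+\inf_{\vb}H_1(\vb)=0$ by the HJBI equation.
\item For (iii), fix $\valpha^*$ and any $\htB\in\mc{B}$, so that $\vb:=\htB(t,\vy)\in\mc{B}(t,\vy)$. Since $H$ is affine in $\vb$,
\[
H(\valpha^{*,v},\vb)=H(\valpha^{*,v},\htB^{*,v})+(\vb-\htB^{*,v})^\top\bigl(\partial_x v\,\valpha^{*,v}+\nabla_{\vy}v\bigr)\ge H_1(\htB^{*,v}),
\]
using the first-order condition from Step 1. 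Hence $\partial_t v+H(\valpha^{*,v},\vb)\ge 0$.
\end{itemize}
So $V$ has nonpositive drift in case (ii) and nonnegative drift in case (iii). This is the only place convexity of $\mc{B}(t,\vy)$ enters, and no specific form of $U$ is used.

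\emph{Step 3 (from local to true super/submartingales; the main obstacle).} The stochastic integrals are only local martingales, so I would localize with stopping times $\tau_n\uparrow T$. To pass to the limit I need the families $\{v(\tau_n,X_{\tau_n},\vY_{\tau_n})\}$ to be uniformly integrable under $\mP^{\htB^*}$, for every $\valpha\in\mc{A}$, and under every $\mP^{\htB}$ for $\valpha^*$. Alternatively, a one-sided bound suffices, e.g.\ via Fatou, together with $\valpha^*\in\mc{A}$ and the well-posedness of the closed-loop SDE for $X$.

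I would try to obtain these from growth bounds on $v$ and its derivatives. I would combine these with the sub-Gaussian prior condition \eqref{eq:sub_gaussian_condition}, which controls $\vvarphi(t,\vY_t)$ and hence $\htB$ through the bounded radius $\epsilon(t)$. I would also use Novikov-type estimates to control the densities $\d\mP^{\htB}/\d\mP$. If the statement is meant with no growth hypotheses, I expect this step to require either adding an implicit integrability assumption on $v$, or restricting $\mc{A}$ to controls for which the relevant local martingales are true martingales.

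Once (i)--(iii) hold, Theorem~\ref{thm:mart_opt_principle} gives that $(\valpha^*,\htB^*)$ is an optimal pair, with value $v(0,x_0,0)$.
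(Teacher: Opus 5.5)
Your argument is correct in the setting of the theorem and has the same architecture as the paper's proof. Both verify (i)--(iii) of Theorem~\ref{thm:mart_opt_principle} via It\^o's formula and a pointwise sign for $\partial_t v+H$, and your treatment of (ii) is exactly the paper's Step~1. The routes differ in (iii).

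\textbf{How the two arguments for (iii) compare.} The paper substitutes $v=\Phi U$ and uses $(U')^2=UU''$. It then splits $\partial_t v+H(\valpha^*,\vb)$ into $H_1(\vb)-H_1(\htB^{*,v})$ plus the quadratic gap $\tfrac12(\vK(\vb)-\vK(\htB^{*,v}))^\top A^{-1}(\vK(\vb)-\vK(\htB^{*,v}))$, evaluates both in closed form, and arrives at $-U(x)\Phi\,(\vb-\htB^{*,v})^\top\Sigma^{-1}\htB^{*,v}$. Its sign is Lemma~\ref{lem:diff_condition}.

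You instead use that $H$ is affine in $\vb$, together with the variational inequality for the minimiser of the convex function $H_1$; you get its gradient $\partial_x v\,\tilde\valpha(\vb)+\nabla_{\vy}v$ from the envelope formula. These are the same inequality. For $v=\Phi U$ one checks $\partial_x v\,\valpha^{*,v}+\nabla_{\vy}v=-U(x)\Phi\,\Sigma^{-1}\htB^{*,v}$. So your first-order condition is $-U(x)$ times the quantity in Lemma~\ref{lem:diff_condition}, and your affine expansion reproduces the last line of the paper's computation.

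Your route is shorter and shows that the saddle-point inequality does not need the closed form of $H_1$. The paper's route gives an explicit description of $\htB^{*,v}$ as the minimiser of the fixed quadratic $\tfrac12\vb^\top\Sigma^{-1}\vb$ over $\mc{B}(t,\vy)$.

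\textbf{Where your claimed generality overreaches.} That explicit description is exactly what your general-$v$ reading loses. For a general $v$, the minimiser of $H_1(t,x,\vy,\cdot)$ over $\mc{B}(t,\vy)$ depends on $x$. Then $\htB^*_t$ would be a function of $(t,X^{\valpha}_t,\vY_t)$. That is not of the form $\htB(t,\vY_t)$ required in $\mc{B}$, and it changes with $\valpha$, whereas (ii) needs one $\htB^*$ serving all $\valpha\in\mc{A}$. The product structure cancels the linear-in-$\vb$ terms of $H_1$, leaving $-\tfrac12U(x)\Phi(t,\vy)\,\vb^\top\Sigma^{-1}\vb$ plus $\vb$-free terms, so the argmin is independent of $x$. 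Since \eqref{eq:optimal_feedback_B} presupposes $v=\Phi U$, your proof closes once you say this explicitly.

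\textbf{Your Step~3.} The concern is legitimate, and the paper does not address it; it treats the pointwise drift sign as sufficient. You can sharpen it. Because $v=\Phi U\le 0$, $-V^{\valpha^*,\htB}$ is a nonnegative local supermartingale, hence a true supermartingale by Fatou. So (iii) needs nothing beyond checking $\valpha^*\in\mc{A}$, which only involves $(t,\vY_t)$ because $\valpha^*$ does not depend on $x$. For (ii), however, $V^{\valpha,\htB^*}$ is bounded above rather than below, so Fatou runs the wrong way. Some additional ingredient is needed there, for example a uniform-integrability requirement on admissible $\valpha$ or a separate duality bound.
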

\begin{proof}
Based on Theorem~\ref{thm:mart_opt_principle}, it suffices to check that the family of stochastic processes 
\begin{equation*}
    \{ V_t^{{\valpha},\htB} = v(t,X_t^{{\valpha},\htB},\vY_t) \, , 0 \leq t \leq T, \, {\valpha} \in \mathcal{A},\, \htB \in \mc{B}\},
\end{equation*}
where $v$ is a solution to \eqref{eq:modified_HJBI_equation}, satisfies the conditions of the martingale optimality principle with feedback optimizers ${\valpha}^{*,v}_t(t,x,\vy)$ and $\htB_t^{*,v}(t,\vy)$.

By the terminal equation in \eqref{eq:modified_HJBI_equation}, Condition (i) is already satisfied. By Itô's formula as in \eqref{eq:ito_results}, it suffices to check (ii) that under the feedback-type relaxed Bayesian estimator $\htB_t^{*,v}(t,\vy)$, for all ${\valpha} \in \mathcal{A}$, we have
$\E^{\mP^{\htB^*}} \Big[ \partial_t v(t,X_t^{\valpha},\vY_t) + H(t,X_t^{\valpha},\vY_t,{\valpha}_t,\htB_t^*) \Big] \le 0, \, \forall \, t\in [0,T) $, and (iii) that under the feedback control ${\valpha}^{*,v}_t(t,x,\vy)$, for all $\htB \in \mc{B}$, we have
$\E^{\mP^{\htB}} \Big[ \partial_t v(t,X_t^{{\valpha}^*},\vY_t) + H(t,X_t^{{\valpha}^*},\vY_t,{\valpha}_t^*,\htB_t) \Big] \ge 0, \, \forall \, t\in [0,T) $.

\vspace{4pt}
\noindent \textbf{Step1: Condition (ii) in Theorem~\ref{thm:mart_opt_principle}}.
As $v$ is a solution to \eqref{eq:modified_HJBI_equation}, and the feedback-type relaxed Bayesian estimator $\htB_t^*:=\htB_t^{*,v}(t,\vy)$ is a solution of the outside infimum for 
\begin{equation*}
    \inf\limits_{\htB_t \in \mc{B}(t,\vy)} \sup\limits_{{\valpha}_t \in \mR^d} H^v(t,x,\vy,{\valpha}_t,\htB_t),
\end{equation*}
we have
\begin{equation*}
    \pt_t v(t,x,\vy) + \sup_{{\valpha}_t \in \mR^d} H^v(t,x,\vy,{\valpha}_t,\htB_t^*) = 0,  \quad  \forall (t,x,\vy)\in [0,T)\times \mR \times \mR^d.
\end{equation*}
Consequently, for all ${\valpha}_t \in \mR^d$,
\begin{equation*}
    \pt_t v(t,x,\vy) + H^v(t,x,\vy,{\valpha}_t,\htB_t^*) \le 0,  \quad \forall (t,x,\vy)\in [0,T)\times \mR \times \mR^d.
\end{equation*}
Therefore, when $\htB^* := \{ \htB_t^{*,v}(t,\vY_t) \}$, for all ${\valpha} \in \mathcal{A}$, 
\begin{equation*}
    \E^{\mP^{\htB^*}} \lt[ \partial_t v(t,X_t^{\valpha},\vY_t) + H(t,X_t^{\valpha},\vY_t,{\valpha}_t,\htB_t^*) \rt] \le 0, \, \forall \, t\in [0,T].
\end{equation*}
This proves Condition (ii).

\vspace{4pt}
\noindent \textbf{Step2: Condition (iii) in Theorem~\ref{thm:mart_opt_principle}}. 
By Lemma~\ref{lem:Hamilton_minmax_optimizer}, $\tilde{{\valpha}}_t^v (t,x,\vy,\htB_t)$ is the optimizer of $\sup\limits_{{\valpha}_t \in \mR^d} H(t,x,\vy,{\valpha}_t,\htB_t)$  for any $\htB = (\htB_t)_{t \in [0,T]}\in \mc{B}$. 
Meanwhile, according to \eqref{eq:modified_HJBI_equation}, for ${\valpha}_t^* = {\valpha}^{*,v}_t(t,x,\vy)=\tilde{{\valpha}}_t^{v} (t,x,\vy,\htB_t^{*,v}(t,\vy))$, we have
\begin{equation*}
  \partial_t v(t,x,\vy) +H\lt(t,x,\vy,{\valpha}_t^*,\htB_t^{*,v}(t,\vy) \rt)  = 0,  \quad \forall (t,x,\vy)\in [0,T)\times \mR \times \mR^d.
\end{equation*}
Therefore,
\begin{equation*}
    \begin{aligned}
& \partial_t v(t,x,\vy) + H\lt(t,x,\vy,{\valpha}_t^*,\htB_t\rt) \\
= & \bigg[ \partial_t v(t,x,\vy) +H\lt(t,x,\vy,{\valpha}_t^*,\htB_t^{*,v}(t,\vy) \rt) \bigg] \\
& + \bigg[H\lt(t,x,\vy,\tilde{{\valpha}}_t^v (t,x,\vy,\htB_t),\htB_t \rt) -H\lt(t,x,\vy,{\valpha}_t^*,\htB_t^{*,v}(t,\vy) \rt)  \bigg] \\
&+\bigg[H\lt(t,x,\vy,{\valpha}_t^*,\htB_t\rt) - H\lt(t,x,\vy,\tilde{{\valpha}}_t^v (t,x,\vy,\htB_t),\htB_t \rt)\bigg]\\
= & \bigg[H_1\lt(t,x,\vy,\htB_t \rt) -H_1\lt(t,x,\vy,\htB_t^{*,v}(t,\vy) \rt)  \bigg] \\
& +\bigg[\frac{1}{2}\lt(\vK(t,x,\vy,\htB_t)-\vK(t,x,\vy,\htB_t^{*,v}(t,\vy))\rt)^\top A(t,x,\vy)^{-1} \lt(\vK(t,x,\vy,\htB_t)-\vK(t,x,\vy,\htB_t^{*,v}(t,\vy))\rt)\bigg]\\
= & \quad U(x)\bigg[-\frac{1}{2}\,\Phi(t,\vy)\,(\htB_t^\top \Sigma^{-1}\htB_t - \htB^{*,v \top}_t \Sigma^{-1}\htB^{*,v}_t) \bigg]
 +  \frac{ U(x) \Phi(t,\vy)}{2}\big[(\htB_t - \htB^{*,v}_t)^\top \Sigma^{-1}(\htB_t - \htB^{*,v}_t) \big]\\
= & - U(x) \big[ \Phi(t,\vy)\,(\htB_t - \htB^{*,v}_t)^\top \Sigma^{-1} \htB^{*,v}_t \big]\ge  0.
    \end{aligned}
\end{equation*}
The last inequality is due to $- U(x) \ge 0 $ and Lemma~\ref{lem:diff_condition}. Therefore, when ${\valpha}_t^* = {\valpha}^{*,v}_t(t,x,\vy)=\tilde{{\valpha}}_t^{v} (t,x,\vy,\htB_t^{*,v}(t,\vy))$, for all $\htB \in \mc{B}$, we have
\begin{equation*}
    \E^{\mP^{\htB}} \lt[ \partial_t v(t,X_t^{\valpha^*},\vY_t) + H(t,X_t^{\valpha^*},\vY_t,{\valpha}_t^*,\htB_t) \rt] \ge 0, \, \forall \, t\in [0,T],
\end{equation*}
i.e.,  Condition (iii) holds. Thus we complete the proof.
\end{proof}

Using the verification theorem, we obtain that the robust Bayesian optimization problem~\eqref{eq:robust_mpr_objective} admits an optimal feedback control (portfolio) \eqref{eq:optimal_feedback_alpha}:
\begin{equation*}
    \begin{aligned}
        {\valpha}^{*,v}_t(t,x,\vy) = \frac{1}{\gamma} \Bigg[ \Sigma^{-1}\htB_t^{*,v}(t,\vy) + \frac{\nabla_{\vy} \Phi(t,\vy)}{\Phi(t,\vy)} \Bigg],
    \end{aligned}
\end{equation*}
and a worst-case feedback-type relaxed Bayesian estimator \eqref{eq:optimal_feedback_B}:
\begin{equation*}
   \htB_t^{*,v}= \htB_t^{*,v}(t,\vy)= \Phi(t,\vy)\, \cdot \arg \min_{\htB_t \in \mc{B}(t,\vy)}   \Big\{ \frac{1}{2}\,\htB_t^\top \Sigma^{-1}\htB_t \Big\},
\end{equation*}
where $\Phi(t,\vy)$ is a solution for the HJB equation~\eqref{eq:semi_explicit_HJB_equation}.  Meanwhile, under such a feedback portfolio, we can solve the wealth dynamic $\{X_t\}_{t \in [0,T]}$ from
\eqref{eq:state_process_r_is_0}, which has a unique strong solution by \cite{karatzas1998brownian}. And the value function $V_0$ in \eqref{eq:robust_mpr_objective} can be derived from \eqref{eq:thm1-2} by the martingale optimality principle.

\section{Examples for the Semi-explicit Solution under Different Choices of the Ambiguity Set}\label{sec:example_and_extensions}

In this section, we verify that, under several representative choices of the ambiguity set, the induced feedback-type ambiguity set $\mc{B}(t,\vy)$ is convex and closed, and the corresponding source term function $g(t,\vy)$ satisfies the conditions required in Theorem~\ref{thm:existence_of_semisolution}. These properties are sufficient for ensuring the existence of classical solutions to the modified HJBI equation, the well-definedness of the associated feedback optimizers, and the validity of the verification theorem.

\subsection{Single-constraint Ambiguity: a Unified Feedback Structure}

For Examples~\ref{ex:L2_distribution}, \ref{ex:L2_expectation} and \ref{ex:L2_sample_path} in Section~\ref{subsec:ambiguity_setup}, we note a somewhat surprising fact: when the modified HJB equation~\eqref{eq:modified_HJBI_equation} is used to characterize the conditional problem at time $t$, these different ambiguity sets can all be unified to a same structure via the feedback-type ambiguity set
\begin{equation}\label{eq:feedback_L2_ambiguity}
\mc{B}(t,\vy)  :=
\left\{
\vb \in \mR^d : \mc{D}(\vb,\vvarphi(t,\vy)) \le \epsilon(t)
\right\}
\end{equation}
with a norm-based distance $\mc{D}(\vx,\vy) := \|\vx - \vy\|$. As all norms on $\mR^d$ are equivalent up to multiplicative scaling constants, without loss of generality, we assume that $\| \cdot \|$ is the 2-norm (Euclidean norm).

As the norm-based distance is continuous on $\mR^d$, $\mc{B}(t,\vy)$ is closed. Meanwhile, the convexity is obtained from triangle inequality and absolute homogeneity. Thus, $\mc{B}(t,\vy)$ is closed and convex under Examples~\ref{ex:L2_distribution}, \ref{ex:L2_expectation} and \ref{ex:L2_sample_path}. Then, in the following, we will prove that $g(t,\vy)$ satisfies the conditions in Theorem~\ref{thm:existence_of_semisolution}. To analyze perturbations of source term 
\begin{equation*}
    g(t,\vy) = \inf\limits_{\htB_t \in \mc{B}(t,\vy)}   \Big\{ \frac{1}{2}\,\htB_t^\top \Sigma^{-1}\htB_t \Big\}
\end{equation*}
with respect to $(t,\vy)$, we isolate its dependence
on the pointwise values of the functions $\vvarphi(\cdot,\cdot)$ and $\epsilon(\cdot)$.
For each fixed $(t,\vy)$, the quantities $(t,\vy)$ and $\epsilon(t)$ are fixed and treated as finite-dimensional parameters. With a slight abuse of
notation, we still denote these pointwise values by $\vvarphi$ and $\epsilon$,
respectively. This convention keeps the notation close to the original source term
and makes the correspondence transparent. We define
\begin{equation*}
G(\vvarphi,\epsilon)
:=
\inf_{\mc D(\vb,\vvarphi)\le \epsilon}
\left\{
\frac12\, \vb^\top \Sigma^{-1} \vb
\right\},
\qquad
(\vvarphi,\epsilon)\in \mR^d\times[0,\bar \epsilon].
\end{equation*}
Then, we have
\begin{equation*}
    g(t,\vy)
=
G\big(\vvarphi(t,\vy),\epsilon(t)\big)  \qquad \forall \, (t,\vy)\in[0,T]\times\mR^d.
\end{equation*}

In the following, we establish the stability of $G$ with respect to perturbations in the center
$\vvarphi$ and the tolerance $\epsilon$, which will then be used to prove the
regularity properties of the source term $g$.

\begin{proposition}[Perturbation with respect to the center $\vvarphi$]\label{prop:1_phi}
Suppose $\mc{K}$ is a compact subset of $\mR^d$. There exists a constant $K >0$ only depending on $\Sigma$ and $\mc{K}$ such that for any $\vvarphi_1,\vvarphi_2\in \mc{K}$ and any $\epsilon \in [0,\bar \epsilon]$,
\begin{equation*}
\begin{aligned}
|G(\vvarphi_1,\epsilon)-G(\vvarphi_2,\epsilon)|
\le\;&
K(\nm{\vvarphi_1}+\nm{\vvarphi_2}+\epsilon)
\nm{\vvarphi_1-\vvarphi_2}.
\end{aligned}
\end{equation*}
\end{proposition}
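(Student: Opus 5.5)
We would exploit translation invariance of the feasible set. Since $\mc D(\vb,\vvarphi)=\nm{\vb-\vvarphi}$ (Euclidean, as fixed above), we substitute $\vb=\vvarphi+\vdelta$ with $\nm{\vdelta}\le\epsilon$. This gives
\begin{equation*}
G(\vvarphi,\epsilon)=\min_{\nm{\vdelta}\le \epsilon}\frac12(\vvarphi+\vdelta)^\top\Sigma^{-1}(\vvarphi+\vdelta),
\end{equation*}
a minimization over a fixed compact ball $\overline{B}(0,\epsilon)$ that does not depend on $\vvarphi$. The minimum is attained by continuity and compactness. The whole proposition then reduces to a standard ``difference of minima over a common set'' argument.

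Let $\vdelta_2$ be a minimizer for $\vvarphi_2$. Testing the problem for $\vvarphi_1$ with the same $\vdelta_2$ gives
\begin{equation*}
G(\vvarphi_1,\epsilon)-G(\vvarphi_2,\epsilon)\le \frac12\Big[(\vvarphi_1+\vdelta_2)^\top\Sigma^{-1}(\vvarphi_1+\vdelta_2)-(\vvarphi_2+\vdelta_2)^\top\Sigma^{-1}(\vvarphi_2+\vdelta_2)\Big].
\end{equation*}
Using the symmetric identity $\frac12(u^\top\Sigma^{-1}u-w^\top\Sigma^{-1}w)=\frac12(u-w)^\top\Sigma^{-1}(u+w)$ with $u=\vvarphi_1+\vdelta_2$ and $w=\vvarphi_2+\vdelta_2$, the right-hand side equals $\frac12(\vvarphi_1-\vvarphi_2)^\top\Sigma^{-1}(\vvarphi_1+\vvarphi_2+2\vdelta_2)$. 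By Cauchy--Schwarz and $\nm{\vdelta_2}\le\epsilon$, this is at most
\begin{equation*}
\frac12\nm{\Sigma^{-1}}\big(\nm{\vvarphi_1}+\nm{\vvarphi_2}+2\epsilon\big)\nm{\vvarphi_1-\vvarphi_2}.
\end{equation*}
Exchanging the roles of $\vvarphi_1$ and $\vvarphi_2$, with a minimizer $\vdelta_1$ for $\vvarphi_1$, gives the symmetric bound. Together the two bounds yield the claim with $K=\nm{\Sigma^{-1}}$, where $\nm{\cdot}$ is the operator norm.

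Here $K$ depends only on $\Sigma$; the compact set $\mc K$ is not actually needed, but this is harmless since the statement only requires existence of such a $K$. If a general norm $\nm{\cdot}$ were kept instead of the Euclidean one, the only change would be the norm-equivalence constant absorbed into $K$.

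The only step needing care is attainment of the minimizer, which follows from compactness of the ball. Even without it, one can run the argument with $\eta$-minimizers and let $\eta\to0$. Hence I expect no genuine obstacle; the main point is noticing that the feasible set after translation is independent of $\vvarphi$.
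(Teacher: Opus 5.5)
Your proof is correct and is essentially the paper's argument: you shift a minimizer for one center by $\vvarphi_1-\vvarphi_2$ (equivalently, reuse the same offset in the fixed ball $\nm{\vb-\vvarphi}\le\epsilon$), compare the two objective values, and then symmetrize. The only difference is that your difference-of-squares identity absorbs the quadratic remainder $K\nm{\vvarphi_1-\vvarphi_2}^2$ directly, whereas the paper bounds that term separately and appeals to compactness of $\mc K$ to absorb it; this is why you obtain $K=\nm{\Sigma^{-1}}$ with no dependence on $\mc K$.
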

\begin{proof}

Let $\vdelta:=\vvarphi_2-\vvarphi_1$, and choose $
\vb_1
\in
\operatorname*{arg\,min}\limits_{\mc D(\vb,\vvarphi_1)\le\epsilon}
\frac12 \vb^\top\Sigma^{-1}\vb $. Define $\tilde \vb_1:=\vb_1+\vdelta$. Then,  using translation invariance of distance $\mc D$, we have $\mc D(\tilde \vb_1,\vvarphi_2)
=
\mc D(\vb_1,\vvarphi_1)
\le
\epsilon$.  Hence $\tilde \vb_1$ is feasible for $G(\vvarphi_2,\epsilon)$, and $
G(\vvarphi_2,\epsilon)\le
\frac12
\tilde \vb_1^\top\Sigma^{-1}\tilde \vb_1.
$ Then
\begin{equation*}
\begin{aligned}
    G(\vvarphi_2,\epsilon)-G(\vvarphi_1,\epsilon)
& \le
\frac12
(\tilde \vb_1^\top\Sigma^{-1}\tilde \vb_1
-
\vb_1^\top\Sigma^{-1}\vb_1)  =
\vb_1^\top\Sigma^{-1}\vdelta
+
\frac12
\vdelta^\top\Sigma^{-1}\vdelta  \le
K\nm{\vb_1}\nm{\vdelta}
+
K\nm{\vdelta}^2.
\end{aligned}
\end{equation*}
As $\mc D(\vb_1,\vvarphi_1)
\le
\epsilon$, we obtain  $\nm{\vb_1}
\le
\nm{\vvarphi_1}+\epsilon$. Furthermore,
\begin{equation*}
G(\vvarphi_2,\epsilon)-G(\vvarphi_1,\epsilon)
\le
K(\nm{\vvarphi_1}+\epsilon)\nm{\vvarphi_1-\vvarphi_2}
+
K\nm{\vvarphi_1-\vvarphi_2}^2.
\end{equation*}
Exchanging $\vvarphi_1,\vvarphi_2$ and applying that $\vvarphi_1,\vvarphi_2\in \mc{K}$, we obtain the required inequality.
\end{proof}

\begin{proposition}[Perturbation with respect to the tolerance $\epsilon$]\label{prop:2_epsilon}
There exists a constant $K >0$ only depending on $\Sigma$ such that for any $\vvarphi\in\mR^d$ and $ \epsilon_1,\epsilon_2 \in [0, \bar \epsilon]$,
\begin{equation*}
|G(\vvarphi,\epsilon_1)-G(\vvarphi,\epsilon_2)|
\le
K(\nm{\vvarphi}+\epsilon_1+\epsilon_2)
|\epsilon_1-\epsilon_2|.
\end{equation*}
\end{proposition}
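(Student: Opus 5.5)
Without loss of generality take $\epsilon_1\le\epsilon_2$. Since the feasible set $\{\vb:\mc D(\vb,\vvarphi)\le\epsilon\}$ grows with $\epsilon$, $G(\vvarphi,\cdot)$ is nonincreasing. Hence $0\le G(\vvarphi,\epsilon_1)-G(\vvarphi,\epsilon_2)$, and only an upper bound on this difference is needed. As in Proposition~\ref{prop:1_phi}, we work with the Euclidean norm, so $\mc D(\vb,\vvarphi)=\nm{\vb-\vvarphi}$.

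The idea is to mirror the translation argument of Proposition~\ref{prop:1_phi}, replacing translation by a radial contraction toward the center.

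\textbf{Step 1 (choose a minimizer).} Take $\vb_2\in\operatorname*{arg\,min}_{\nm{\vb-\vvarphi}\le\epsilon_2}\frac12\vb^\top\Sigma^{-1}\vb$. It exists by closedness and convexity of the ball and coercivity of the objective.

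\textbf{Step 2 (build a feasible point for $\epsilon_1$).} If $\epsilon_2=0$ then $\epsilon_1=0$ and there is nothing to prove. Otherwise define
\[
\tilde\vb_2:=\vvarphi+\frac{\epsilon_1}{\epsilon_2}(\vb_2-\vvarphi).
\]
Then $\nm{\tilde\vb_2-\vvarphi}\le\epsilon_1$, so $\tilde\vb_2$ is feasible for $G(\vvarphi,\epsilon_1)$. Moreover, with $\vdelta:=\tilde\vb_2-\vb_2=(1-\epsilon_1/\epsilon_2)(\vvarphi-\vb_2)$, we get $\nm{\vdelta}\le\frac{\epsilon_2-\epsilon_1}{\epsilon_2}\,\epsilon_2=\epsilon_2-\epsilon_1$.

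\textbf{Step 3 (expand the quadratic).} Exactly as in Proposition~\ref{prop:1_phi},
\[
G(\vvarphi,\epsilon_1)-G(\vvarphi,\epsilon_2)\le\frac12\tilde\vb_2^\top\Sigma^{-1}\tilde\vb_2-\frac12\vb_2^\top\Sigma^{-1}\vb_2=\vb_2^\top\Sigma^{-1}\vdelta+\frac12\vdelta^\top\Sigma^{-1}\vdelta .
\]
With $K$ the operator norm of $\Sigma^{-1}$, this is at most $K\nm{\vb_2}\nm{\vdelta}+K\nm{\vdelta}^2$.

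\textbf{Step 4 (bound everything by $\nm{\vvarphi}+\epsilon_1+\epsilon_2$).} Use $\nm{\vb_2}\le\nm{\vvarphi}+\epsilon_2$ together with $\nm{\vdelta}\le|\epsilon_1-\epsilon_2|\le\epsilon_1+\epsilon_2$. This gives
\[
G(\vvarphi,\epsilon_1)-G(\vvarphi,\epsilon_2)\le 2K(\nm{\vvarphi}+\epsilon_1+\epsilon_2)|\epsilon_1-\epsilon_2|.
\]
Enlarging $K$ to $2K$, which still depends only on $\Sigma$, completes the proof. Note that no compactness of $\vvarphi$ is needed here, because the quadratic term is absorbed using $\nm{\vdelta}\le\epsilon_1+\epsilon_2$.

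The only genuinely delicate point is Step 2. A plain translation no longer preserves feasibility when the radius shrinks, and the contraction must move the point by at most $|\epsilon_1-\epsilon_2|$. For a general norm the same contraction works verbatim, using absolute homogeneity of $\mc D$. The degenerate case $\epsilon_2=0$ is handled separately, as noted in Step 2.
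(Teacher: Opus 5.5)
Your proof is correct and follows the paper's argument: take a minimizer $\vb_2$ for the larger tolerance, move it radially toward $\vvarphi$ into the smaller ball at a cost of at most $|\epsilon_1-\epsilon_2|$, and expand the quadratic as in Proposition~\ref{prop:1_phi}. The differences are minor. You use the homothety with ratio $\epsilon_1/\epsilon_2$, whereas the paper uses the radial projection onto the smaller ball. You also absorb the quadratic term via $|\epsilon_1-\epsilon_2|\le\epsilon_1+\epsilon_2$, which shows more transparently than the paper's appeal to boundedness of $[0,\bar\epsilon]$ that $K$ depends only on $\Sigma$.
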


\begin{proof}

Without loss of generality, we assume $\epsilon_1\le \epsilon_2$. Then
\begin{equation*}
\{\vb:\mc D(\vb,\vvarphi)\le\epsilon_1\}
\subset
\{\vb:\mc D(\vb,\vvarphi)\le\epsilon_2\},\ \mbox{and} \ G(\vvarphi,\epsilon_2)
\le
G(\vvarphi,\epsilon_1).
\end{equation*}
Let
\begin{equation*}
\vb_2
\in
\operatorname*{arg\,min}_{\mc D(\vb,\vvarphi)\le\epsilon_2}
\frac12 \vb^\top\Sigma^{-1}\vb.
\end{equation*}
If $\vb_2$ does not belong to the smaller ambiguity ball $\mc B_{\epsilon_1}(\vvarphi)
:=
\{\vb:\mc D(\vb,\vvarphi)\le \epsilon_1\}$, we define $\hat{\vb}_2$ as the radial projection of $\vb_2$ onto $\mc B_{\epsilon_1}(\vvarphi)$ along the line segment joining
$\vvarphi$ and $\vb_2$. Otherwise, we set $\hat{\vb}_2=\vb_2$. Then $\hat{\vb}_2$ is feasible under tolerance $\epsilon_1$, and
\begin{equation*}
\| \hat \vb_2-\vb_2 \|
\le
|\epsilon_2-\epsilon_1|.
\end{equation*}
Thus,
\begin{equation*}
\begin{aligned}
    0 \le G(\vvarphi,\epsilon_1)-G(\vvarphi,\epsilon_2)
& \le
\frac12
(\hat \vb_2^\top\Sigma^{-1}\hat \vb_2
-
\vb_2^\top\Sigma^{-1}\vb_2)\le
K\nm{\vb_2}\|\hat \vb_2-\vb_2\|
+
K\|\hat \vb_2-\vb_2\|^2.
\end{aligned}
\end{equation*}
As $\mc D(\vb_2,\vvarphi)
\le
\epsilon_2$, we obtain $\nm{\vb_2} \le \nm{\vvarphi}+\epsilon_2$. Then 
\begin{equation*}
G(\vvarphi,\epsilon_1)-G(\vvarphi,\epsilon_2)
\le
K(\nm{\vvarphi}+\epsilon_2)|\epsilon_1-\epsilon_2|
+
K|\epsilon_1-\epsilon_2|^2.
\end{equation*}
By $ \epsilon_1,\epsilon_2 \in [0, \bar \epsilon]$ is bounded, we obtain the required inequality.
\end{proof}

Then, we obtain the H\"older continuity and polynomial growth of function $g(t,\vy)$ in the following.

\begin{proposition}[Growth estimate and local H\"older continuity of $g(t,\vy)$]\label{prop:g_growth_holder}
Assume that there exists some $\alpha \in (0,1)$ such that
\begin{enumerate}
    \item $\epsilon(\cdot) \in C^{\alpha}[0,T]$ and is bounded on $[0,T]$, i.e., $0\le \epsilon(t) \le \bar\epsilon$;

    \item $\vvarphi(\cdot,\cdot)\in C^\alpha_{loc}([0,T]\times\mR^d)$, and is of polynomial growth.
\end{enumerate}
Then, we have the following growth estimate and local H\"older continuity of $g(\cdot,\cdot)$.

\begin{enumerate}
    \item[(i)] (Polynomial growth)
    There exist constants $K>0$ and $m>0$ such that
    \begin{equation*}
        |g(t,\vy)|
        \le
        K(1+\nm{\vy}^{\,m}),
        \qquad
       \forall \, (t,\vy)\in[0,T]\times\mR^d.
    \end{equation*}

    \item[(ii)] (Local H\"older continuity in $t$)
    For every compact set $\mc K\subset\mR^d$, there exists $K_{\mc K}>0$ such that
    \begin{equation*}
        |g(t_1,\vy)-g(t_2,\vy)|
        \le
        K_{\mc K}|t_1-t_2|^{\alpha},
        \qquad
       \forall \, \vy\in\mc K, \ t_1,t_2 \in [0,T].
    \end{equation*}

    \item[(iii)] (Local H\"older continuity in $\vy$)
    For every compact set $\mc K\subset\mR^d$, there exists $K_{\mc K}>0$ such that
    \begin{equation*}
        |g(t,\vy_1)-g(t,\vy_2)|
        \le
        K_{\mc K}\nm{\vy_1-\vy_2}^{\alpha},
        \qquad
      \forall \,  \vy_1,\vy_2\in\mc K, \ t \in [0,T].
    \end{equation*}
\end{enumerate}
Hence $g\in C_{\mathrm{loc}}^{\alpha,\alpha}([0,T]\times\mR^d) \subset C_{\mathrm{loc}}^{\alpha/2,\alpha}([0,T]\times\mR^d)$ is locally H\"older continuous, and of polynomial growth.
\end{proposition}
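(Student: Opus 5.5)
The plan is to reduce everything to the representation $g(t,\vy)=G(\vvarphi(t,\vy),\epsilon(t))$ and then apply Propositions~\ref{prop:1_phi} and~\ref{prop:2_epsilon}, together with the assumed regularity of $\vvarphi$ and $\epsilon$.

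\textbf{(i) Polynomial growth.} Since the objective is nonnegative, $g\ge 0$. The center $\vvarphi(t,\vy)$ is feasible for $G(\vvarphi(t,\vy),\epsilon(t))$ because $\mc D(\vvarphi,\vvarphi)=0\le\epsilon(t)$. Hence
\[
0\le g(t,\vy)\le \tfrac12\vvarphi(t,\vy)^\top\Sigma^{-1}\vvarphi(t,\vy)\le \tfrac12\lambda_{\max}(\Sigma^{-1})\nm{\vvarphi(t,\vy)}^2 .
\]
Assumption 2 gives $\nm{\vvarphi(t,\vy)}\le C(1+\nm{\vy}^{m_0})$. Therefore $|g(t,\vy)|\le K(1+\nm{\vy}^{2m_0})$, so (i) holds with $m=2m_0$.

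\textbf{(ii) H\"older continuity in $t$.} Fix a compact set $\mc K$. Since $\vvarphi$ is continuous, $[0,T]\times\mc K$ is mapped into a compact set $\mc K'\subset\mR^d$, and on it $\nm{\vvarphi}\le M_{\mc K}$. Split the increment with the triangle inequality:
\[
|g(t_1,\vy)-g(t_2,\vy)|\le |G(\vvarphi(t_1,\vy),\epsilon(t_1))-G(\vvarphi(t_2,\vy),\epsilon(t_1))|+|G(\vvarphi(t_2,\vy),\epsilon(t_1))-G(\vvarphi(t_2,\vy),\epsilon(t_2))|.
\]
For the first term, Proposition~\ref{prop:1_phi} on $\mc K'$ gives the bound $K(2M_{\mc K}+\bar\epsilon)\nm{\vvarphi(t_1,\vy)-\vvarphi(t_2,\vy)}$. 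Local H\"older continuity of $\vvarphi$ on $[0,T]\times\mc K$ then bounds this by $C_{\mc K}|t_1-t_2|^\alpha$. For the second term, Proposition~\ref{prop:2_epsilon} gives $K(M_{\mc K}+2\bar\epsilon)|\epsilon(t_1)-\epsilon(t_2)|$, and $\epsilon\in C^\alpha[0,T]$ bounds this by $C|t_1-t_2|^\alpha$.

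\textbf{(iii) H\"older continuity in $\vy$.} Here $\epsilon(t)$ is fixed, so only Proposition~\ref{prop:1_phi} is needed. It gives
\[
|g(t,\vy_1)-g(t,\vy_2)|\le K(2M_{\mc K}+\bar\epsilon)\nm{\vvarphi(t,\vy_1)-\vvarphi(t,\vy_2)}\le K_{\mc K}\nm{\vy_1-\vy_2}^\alpha .
\]

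\textbf{Conclusion.} Combining (ii) and (iii) shows $g\in C^{\alpha,\alpha}_{loc}$. On bounded time intervals $|t_1-t_2|^\alpha\le T^{\alpha/2}|t_1-t_2|^{\alpha/2}$, which gives the embedding into $C^{\alpha/2,\alpha}_{loc}$.

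\textbf{Main obstacle.} The delicate point is the interpretation of $\vvarphi\in C^\alpha_{loc}([0,T]\times\mR^d)$: I will read it as joint H\"older continuity in $(t,\vy)$ on compact sets, uniformly in the other variable. A second point to check is that the constant from Proposition~\ref{prop:1_phi} depends only on the compact image $\mc K'$, which is why the first step in (ii) is to bound $\nm{\vvarphi}$ on $[0,T]\times\mc K$ by continuity. Finally, the norm $\nm{\cdot}$ in the propositions is Euclidean by the earlier reduction, so the constants are uniform.
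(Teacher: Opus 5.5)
Your proposal is correct and follows essentially the same route as the paper. You write $g(t,\vy)=G(\vvarphi(t,\vy),\epsilon(t))$, split the $t$-increment by the triangle inequality into a center perturbation (Proposition~\ref{prop:1_phi}) and a tolerance perturbation (Proposition~\ref{prop:2_epsilon}), and then transfer the H\"older regularity of $\vvarphi$ and $\epsilon$ on the compact set $[0,T]\times\mc K$. The only difference is in (i): you bound $0\le g\le\frac12\vvarphi^\top\Sigma^{-1}\vvarphi$ directly by feasibility of the center, whereas the paper compares with $G(0,\epsilon)=0$ via Proposition~\ref{prop:1_phi}. Your version is slightly cleaner, because that proposition is stated with a constant depending on a compact set while the growth bound must be global. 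You also correctly keep the exponent $\alpha$ where the paper's Steps 2--3 write Lipschitz bounds.
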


\begin{proof}
\textbf{Step 1: Growth estimate.}
Note that
\begin{equation*}
g(t,\vy)
=
G(\vvarphi(t,\vy),\epsilon(t)).
\end{equation*}

\noindent Using $|G(\vvarphi,\epsilon)-G(0,\epsilon)|
\le
K(\nm{\vvarphi}+\epsilon)\nm{\vvarphi}$, we obtain
\begin{equation*}
|G(\vvarphi,\epsilon)|
\le
|G(0,\epsilon)|
+
K(\nm{\vvarphi}+\epsilon)\nm{\vvarphi}.
\end{equation*}

\noindent As $\epsilon(t)\le\bar\epsilon$ and $G(0,\epsilon)=0$, it follows that
\begin{equation}\label{eq:g_growth_1}
|g(t,\vy)|
\le
K(1+\nm{\vvarphi(t,\vy)}^2).
\end{equation}
Meanwhile, as $\vvarphi$ is of polynomial growth, by \eqref{eq:g_growth_1}, $g$ has polynomial growth.\\
\textbf{Step 2: Perturbation with respect to $t$.}
\noindent For any $t_1,t_2 \in [0,T]$ and any fixed $\vy$, 
\begin{equation*}
|g(t_1,\vy)-g(t_2,\vy)|
=
|G(\vvarphi(t_1,\vy),\epsilon(t_1))
-
G(\vvarphi(t_2,\vy),\epsilon(t_2))|.
\end{equation*}

\noindent Using the above perturbation estimate of function $G$, we have
\begin{equation*}
\begin{aligned}
|g(t_1,\vy)-g(t_2,\vy)|
\le\;&
K(\nm{\vvarphi(t_1,\vy)}+\nm{\vvarphi(t_2,\vy)}+\bar\epsilon)
\nm{\vvarphi(t_1,\vy)-\vvarphi(t_2,\vy)}
\\
&+
K(\nm{\vvarphi(t_1,\vy)}+\epsilon(t_1)+\epsilon(t_2))
|\epsilon(t_1)-\epsilon(t_2)|.
\end{aligned}
\end{equation*}

\noindent As $\vvarphi(t,\vy)\in C^\alpha_{loc}([0,T]\times\mR^d)$, for any compact set $\mc K$, we have
\begin{equation*}
\nm{\vvarphi(t_1,\vy)-\vvarphi(t_2,\vy)}
\le
K_{\mc K}|t_1-t_2|, \qquad \forall \, \vy \in \mc K, \  t_1,t_2 \in [0,T].
\end{equation*}
Meanwhile, as $\epsilon(\cdot)$ is H\"older continuous,  i.e., $
|\epsilon(t_1)-\epsilon(t_2)|
\le
K|t_1-t_2|^\alpha, \,\; \forall \, t_1,t_2 \in [0,T]$, we have
\begin{equation*}
|g(t_1,\vy)-g(t_2,\vy)|
\le
K_{\mc K}|t_1-t_2|^\alpha.
\end{equation*}
\textbf{Step 3: Perturbation with respect to $\vy$.} \ 
Similar to Step 2, for any $\vy_1,\vy_2 \in [0,T]$ and any fixed $t$, we have
\begin{equation*}
\begin{aligned}
|g(t,\vy_1)-g(t,\vy_2)|
& =
|G(\vvarphi(t,\vy_1),\epsilon(t))
-
G(\vvarphi(t,\vy_2),\epsilon(t))|\\
& \le 
K(\nm{\vvarphi(t,\vy_1)}+\nm{\vvarphi(t,\vy_2)}+\epsilon(t))
\nm{\vvarphi(t,\vy_1)-\vvarphi(t,\vy_2)}.
\end{aligned}
\end{equation*}

\noindent As $\vvarphi(t,\vy)\in C^\alpha_{loc}([0,T]\times\mR^d)$, for any  compact $\mc K$, we have
\begin{equation*}
\nm{\vvarphi(t,\vy_1)-\vvarphi(t,\vy_2)}
\le
K_{\mc K}\nm{\vy_1-\vy_2}.
\end{equation*}
Then,
\begin{equation*}
|g(t,\vy_1)-g(t,\vy_2)|
\le
K_{\mc K}\nm{\vy_1-\vy_2}.
\end{equation*}
Thus $g$ is locally H\"older continuous in $\vy$.
Combining all estimates proves the proof. \end{proof}

Then, we verify that the conditions in Proposition~\ref{prop:g_growth_holder} hold. Under the sub-Gaussian condition~\eqref{eq:sub_gaussian_condition}, we have that $\vvarphi(\cdot, \cdot) \in C^\infty([0,T]\times\mR^d)$ and is of polynomial growth. And the tolerance level function $\epsilon(\cdot)$ is assumed to be $\alpha$-H\"older continuous and decreasing, and thus bounded. Therefore, by Proposition~\ref{prop:g_growth_holder}, the corresponding function $g(t,\vy)$ satisfies the conditions required in Theorem~\ref{thm:existence_of_semisolution}.


\subsection{Multiple-constraint Ambiguity: a Natural Extension}

This subsection considers the multiple-constraint ambiguity setting, which is a direct extension of the single-constraint framework studied above. In this case, the ambiguity set is obtained as the intersection of several single-constraint ambiguity sets, while the overall analytical structure remains essentially unchanged. The corresponding perturbation results, regularity properties, and existence arguments are therefore analogous to those in the single-constraint case, up to minor technical modifications. Consequently, the proofs follow largely the same line of reasoning, showing that the robust Bayesian framework can naturally extend to more general ambiguity specifications.

When the multiple-constraint ambiguity is taken as in Example~\ref{ex:multi_constraints}, it gives rise to the following feedback-type ambiguity set:
\begin{equation}\label{eq:feedback_L2_ambiguity_multi}
\mc{B}(t,\vy) :=
\left\{
\vb \in \mR^d : \mc{D}_i(\vb,\vvarphi(t,\vy)) \le \epsilon_{i}(t),
\, i=1,2,\cdots,n
\right\},
\end{equation}
where, in this example, each $\mc{D}_i(\vx,\vy):=\|\vx-\vy\|$ is taken to be the Euclidean norm on $\mR^d$. More generally, the subsequent arguments remain valid when the distances $\mc{D}_i$ are induced by different norms, since this only leads to finitely many multiplicative rescalings of the corresponding constraints and does not affect the structure of the proof. We also assume that $\vepsilon(t):=(\epsilon_1(t),\epsilon_2(t),\cdots,\epsilon_n(t))$ is decreasing and non-negative in each component (thus bounded), and is $\alpha$-H\"older continuous for some $\alpha\in(0,1)$.

The feedback-type ambiguity set in~\eqref{eq:feedback_L2_ambiguity_multi} can be viewed as the intersection of multiple single-constraint ambiguity sets, and is therefore closed and convex. Hence, under this multiple-constraint feedback-type ambiguity set, the preceding analysis of the martingale optimality principle, the HJBI equation, and the verification theorem remain valid because those arguments only use the fact that $\mc{B}(t,\vy)$ is closed and convex.

It suffices to verify the stability properties of the corresponding source function
\begin{equation*}
    g(t,\vy)
:=
\inf_{\htB_t\in\mc{B}(t,\vy)}
\left\{
\frac12\,\htB_t^\top\Sigma^{-1}\htB_t
\right\}.
\end{equation*}
Define
\begin{equation*}
G(\vvarphi,\vepsilon)
:=
\inf_{\substack{\mc D_i(\vb,\vvarphi)\le \epsilon_i, \\ i=1,2,\cdots,n}}
\left\{
\frac12\, \vb^\top \Sigma^{-1} \vb
\right\},
\qquad
(\vvarphi,\vepsilon)\in \mR^d\times[0, \bar \epsilon]^n .
\end{equation*}
Then $G$ satisfies stability properties analogous to those in the single-constraint case, as stated below.

\begin{proposition}[Perturbation with respect to the center $\vvarphi$]\label{prop:1_phi_m}
There exists a constant $K >0$ only depending on $\Sigma$ such that for any $\vvarphi_1,\vvarphi_2\in\mR^d$ and any fixed $\vepsilon \in [0,\bar \epsilon]^n$,
\begin{equation*}
\begin{aligned}
|G(\vvarphi_1,\vepsilon)-G(\vvarphi_2,\vepsilon)|
\le\;&
K(\nm{\vvarphi_1}+\nm{\vvarphi_2}+ \nm{\vepsilon} )
\nm{\vvarphi_1-\vvarphi_2}.
\end{aligned}
\end{equation*}
\end{proposition}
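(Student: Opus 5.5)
The argument will follow the proof of Proposition~\ref{prop:1_phi} almost verbatim, using a translation argument. The key structural fact is that the feasible set
\[
\mc B_{\vepsilon}(\vvarphi):=\{\vb:\ \mc D_i(\vb,\vvarphi)\le\epsilon_i,\ i=1,\dots,n\}
\]
is an intersection of closed balls centred at the common point $\vvarphi$. Each $\mc D_i(\vx,\vy)=\|\vx-\vy\|$ is translation invariant, so for $\vdelta:=\vvarphi_2-\vvarphi_1$ we have $\mc B_{\vepsilon}(\vvarphi_2)=\mc B_{\vepsilon}(\vvarphi_1)+\vdelta$. This also holds when the $\mc D_i$ are induced by different norms, since each constraint is translated separately.

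\textbf{Step 1 (minimizer).} Since every $\epsilon_i\ge0$, the centre $\vvarphi_1$ lies in $\mc B_{\vepsilon}(\vvarphi_1)$, so the set is nonempty. It is also closed, convex and bounded. The objective $\frac12\vb^\top\Sigma^{-1}\vb$ is continuous, so a minimizer $\vb_1$ of $G(\vvarphi_1,\vepsilon)$ exists. Moreover $\nm{\vb_1-\vvarphi_1}\le\min_i\epsilon_i\le\nm{\vepsilon}$. Hence $\nm{\vb_1}\le\nm{\vvarphi_1}+\nm{\vepsilon}$, after multiplying by a norm-equivalence constant if the $\mc D_i$ use different norms.

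\textbf{Step 2 (one-sided bound).} The point $\tilde\vb_1:=\vb_1+\vdelta$ is feasible for $G(\vvarphi_2,\vepsilon)$. Therefore
\[
G(\vvarphi_2,\vepsilon)-G(\vvarphi_1,\vepsilon)\le \vb_1^\top\Sigma^{-1}\vdelta+\tfrac12\vdelta^\top\Sigma^{-1}\vdelta\le K\nm{\vb_1}\nm{\vdelta}+K\nm{\vdelta}^2,
\]
where $K$ depends only on $\nm{\Sigma^{-1}}$.

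\textbf{Step 3 (removing the quadratic term).} Proposition~\ref{prop:1_phi} used compactness of $\mc K$ to absorb $\nm{\vdelta}^2$; the present statement is for arbitrary $\vvarphi_1,\vvarphi_2\in\mR^d$, so I would avoid that. Instead, use the elementary bound $\nm{\vdelta}\le\nm{\vvarphi_1}+\nm{\vvarphi_2}$, which gives $\nm{\vdelta}^2\le(\nm{\vvarphi_1}+\nm{\vvarphi_2})\nm{\vdelta}$. Combining this with Step 1 yields
\[
G(\vvarphi_2,\vepsilon)-G(\vvarphi_1,\vepsilon)\le K(\nm{\vvarphi_1}+\nm{\vvarphi_2}+\nm{\vepsilon})\nm{\vvarphi_1-\vvarphi_2}.
\]
Exchanging the roles of $\vvarphi_1$ and $\vvarphi_2$ gives the absolute-value estimate, with $K$ depending only on $\Sigma$ (and on the fixed norm-equivalence constants).

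\textbf{Main obstacle.} The only delicate points are existence of the minimizer, which requires nonemptiness of the intersection, and making the constant independent of any compact set. Nonemptiness holds because all the balls share the centre $\vvarphi$ and $\vepsilon\ge0$. Independence from a compact set comes from the crude bound $\nm{\vdelta}\le\nm{\vvarphi_1}+\nm{\vvarphi_2}$ in Step 3. Neither step uses the monotonicity of $\vepsilon(\cdot)$; that assumption only matters for the $\vepsilon$-perturbation estimate.
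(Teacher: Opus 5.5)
Your proof is correct and follows the paper's argument. You translate a minimizer $\vb_1$ by $\vdelta=\vvarphi_2-\vvarphi_1$, get feasibility from translation invariance of each constraint, and bound $\nm{\vb_1}\le\nm{\vvarphi_1}+\nm{\vepsilon}$, which is exactly the one modification the paper makes to Proposition~\ref{prop:1_phi}. Your Step 3 also fills a detail the paper glosses over when it says the rest of the proof is unchanged: Proposition~\ref{prop:1_phi} absorbed the $\nm{\vdelta}^2$ term using compactness, which is unavailable here, whereas your bound $\nm{\vdelta}^2\le(\nm{\vvarphi_1}+\nm{\vvarphi_2})\nm{\vdelta}$ gives the stated estimate on all of $\mR^d$ with $K$ depending only on $\Sigma$.
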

\begin{proof}
The proof follows the same argument as that of
Proposition~\ref{prop:1_phi}. The only modification is that the multiple constraints 
\begin{equation*}
    \mc D_i(\vb_1,\vvarphi_1)
\le
\epsilon_i, \, i=1,2,\cdots,n,
\end{equation*}
yield the estimate $\nm{\vb_1}
\le
\nm{\vvarphi_1}+ \nm{\vepsilon}$. With this estimate, the rest of the proof is unchanged.
\end{proof}

\begin{proposition}[Perturbation with respect to the tolerance $\epsilon$]\label{prop:2_epsilon_m}
There exists a constant $K >0$ only depending on $\Sigma$ such that for any fixed $\vvarphi\in\mR^d$ and $\vepsilon_1, \vepsilon_2 \in [0, \bar\epsilon ]^n $ satisfying $\epsilon_{1,i} \le \epsilon_{2,i}$, $i=1,2,\cdots,n$, we have
\begin{equation*}
|G(\vvarphi,\vepsilon_1)-G(\vvarphi,\vepsilon_2)|
\le
K(\nm{\vvarphi}+\nm{\vepsilon_1} + \nm{\vepsilon_2})
\nm{\vepsilon_1-\vepsilon_2}.
\end{equation*}
\end{proposition}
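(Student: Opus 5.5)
Since every $\mc D_i$ is the Euclidean distance, the feasible set $\{\vb: \mc D_i(\vb,\vvarphi)\le\epsilon_i,\ i=1,\dots,n\}$ is the intersection of concentric balls around $\vvarphi$, i.e. the single ball of radius $\underline\epsilon:=\min_i\epsilon_i$. The plan is to use this reduction, $G(\vvarphi,\vepsilon)=G(\vvarphi,\underline{\epsilon})$ with $G$ on the right the single-constraint function, and then repeat the radial projection argument of Proposition~\ref{prop:2_epsilon}, now including the ordering hypothesis $\epsilon_{1,i}\le\epsilon_{2,i}$. If the $\mc D_i$ come from different norms, the intersection is still star-shaped and convex around $\vvarphi$, and I would instead scale toward $\vvarphi$ by a factor. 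I describe the Euclidean case and indicate this modification at the end.

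\textbf{Step 1 (monotonicity).} Since $\epsilon_{1,i}\le\epsilon_{2,i}$ for all $i$, the feasible set for $\vepsilon_1$ is contained in that for $\vepsilon_2$. Hence $G(\vvarphi,\vepsilon_2)\le G(\vvarphi,\vepsilon_1)$, and it suffices to bound $G(\vvarphi,\vepsilon_1)-G(\vvarphi,\vepsilon_2)$ from above.

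\textbf{Step 2 (feasible competitor).} Take a minimizer $\vb_2$ for $\vepsilon_2$; it exists because the set is compact and the objective is continuous. If $\vb_2$ already lies in the $\vepsilon_1$-feasible set, put $\hat\vb_2:=\vb_2$. Otherwise let $\hat\vb_2$ be the radial projection of $\vb_2$ onto the ball of radius $\underline\epsilon_1:=\min_i\epsilon_{1,i}$ about $\vvarphi$. Then
\begin{equation*}
\|\hat\vb_2-\vb_2\|\le \big(\|\vb_2-\vvarphi\|-\underline\epsilon_1\big)_+\le \underline\epsilon_2-\underline\epsilon_1\le \max_i|\epsilon_{2,i}-\epsilon_{1,i}|\le\nm{\vepsilon_1-\vepsilon_2},
\end{equation*}
using the elementary fact $|\min_i a_i-\min_i c_i|\le\max_i|a_i-c_i|$. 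For general norms I would take $\hat\vb_2=\vvarphi+\lambda(\vb_2-\vvarphi)$ with $\lambda=\min_i \min\{1,\epsilon_{1,i}/\|\vb_2-\vvarphi\|_i\}$. By norm equivalence this gives the same bound up to a constant.

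\textbf{Step 3 (quadratic expansion).} Exactly as in Proposition~\ref{prop:2_epsilon},
\begin{equation*}
G(\vvarphi,\vepsilon_1)-G(\vvarphi,\vepsilon_2)\le \tfrac12\big(\hat\vb_2^\top\Sigma^{-1}\hat\vb_2-\vb_2^\top\Sigma^{-1}\vb_2\big)\le K\nm{\vb_2}\nm{\hat\vb_2-\vb_2}+K\nm{\hat\vb_2-\vb_2}^2.
\end{equation*}
Feasibility gives $\nm{\vb_2}\le\nm{\vvarphi}+\nm{\vepsilon_2}$, and $\nm{\hat\vb_2-\vb_2}\le\nm{\vepsilon_1-\vepsilon_2}\le\nm{\vepsilon_1}+\nm{\vepsilon_2}$. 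Absorbing the quadratic term yields the stated bound with $K$ depending only on $\Sigma$.

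The main obstacle is Step 2: producing a competitor that satisfies all constraints simultaneously while moving only by $O(\nm{\vepsilon_1-\vepsilon_2})$. Concentric Euclidean balls make this trivial through the minimum radius, which is why I would do that case first. For mixed norms the scaling factor must be controlled uniformly, and that is the only place where extra care, namely norm-equivalence constants, enters.
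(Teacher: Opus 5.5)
Your proposal is correct and follows essentially the same argument as the paper: monotonicity from set inclusion, a minimizer $\vb_2$ for $\vepsilon_2$, and radial projection toward $\vvarphi$ onto the ball of radius $\min_i\epsilon_{1,i}$ (the paper chooses $j\in\arg\min_i\epsilon_{1,i}$), followed by the same quadratic estimate. One small difference is that you absorb the quadratic term using $\nm{\vepsilon_1-\vepsilon_2}\le\nm{\vepsilon_1}+\nm{\vepsilon_2}$, whereas the paper appeals to the bound $\bar\epsilon$; your version is slightly cleaner because $K$ then visibly depends only on $\Sigma$.
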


\begin{proof}
The proof procedure is almost identical to that of Proposition~\ref{prop:2_epsilon} and we only indicate the needed modifications. For $k=1,2$, write
$\vepsilon_k=(\epsilon_{k,1},\ldots,\epsilon_{k,n})$ and let
\begin{equation*}
\mc B_{\vepsilon_k}(\vvarphi)
:=
\{\vb:\mc D_i(\vb,\vvarphi)\le \epsilon_{k,i},\ i=1,\cdots,n\}.
\end{equation*}
Then we have 
\begin{equation*}
\mc B_{\vepsilon_1}(\vvarphi)
\subset
\mc B_{\vepsilon_2}(\vvarphi),\ \mbox{and} \ G(\vvarphi,\vepsilon_2)
\le
G(\vvarphi,\vepsilon_1).
\end{equation*}
Let
\begin{equation*}
\vb_2
\in
\operatorname*{arg\,min}_{\vb\in \mc B_{\vepsilon_2}(\vvarphi)}
\frac{1}{2}\vb^\top\Sigma^{-1}\vb .
\end{equation*}
The feasibility of $\vb_2$ under $\vepsilon_2$ gives
\begin{equation*}
\|\vb_2\|
\le
\|\vvarphi\|+\|\vepsilon_2\|.
\end{equation*}
Choose
\begin{equation*}
j
\in
\operatorname*{arg\,min}_{1\le i\le n}\epsilon_{1,i}.
\end{equation*}
We define $\hat{\vb}_2$ by projecting $\vb_2$ radially toward $\vvarphi$ onto
the ball with radius $\epsilon_{1,j}$. More precisely, if
$\vb_2=\vvarphi$, set $\hat{\vb}_2=\vb_2$; otherwise, set
\begin{equation*}
\hat{\vb}_2
=
\vvarphi
+
\lambda(\vb_2-\vvarphi),
\qquad
\lambda
:=
\min\left\{
1,
\frac{\epsilon_{1,j}}{\|\vb_2-\vvarphi\|}
\right\}.
\end{equation*}
Then $\hat{\vb}_2\in \mc B_{\vepsilon_1}(\vvarphi)$. Moreover, as
$\vb_2\in \mc B_{\vepsilon_2}(\vvarphi)$, we have
\begin{equation*}
\|\hat{\vb}_2-\vb_2\|
=
\bigl(\|\vb_2-\vvarphi\|-\epsilon_{1,j}\bigr)^+
\le
\bigl(\epsilon_{2,j}-\epsilon_{1,j}\bigr)^+
\le
|\epsilon_{2,j}-\epsilon_{1,j}|
\le
K\|\vepsilon_2-\vepsilon_1\|.
\end{equation*}
Using the same quadratic estimate as in the proof of
Proposition~\ref{prop:2_epsilon}, we obtain
\begin{equation*}
G(\vvarphi,\vepsilon_1)-G(\vvarphi,\vepsilon_2)
\le
K(\|\vvarphi\|+\|\vepsilon_2\|)
\|\vepsilon_1-\vepsilon_2\|
+
K\|\vepsilon_1-\vepsilon_2\|^2 .
\end{equation*}
As $\vepsilon_1, \vepsilon_2 \in [0, \bar\epsilon ]^n $ are bounded, we obtain the desired estimate.
\end{proof}

Finally, similar to Proposition~\ref{prop:g_growth_holder}, when $\vepsilon \in C^{\alpha}[0,T]$ is decreasing in every component, we can obtain that the corresponding $g(t,\vy) := \inf\limits_{\htB_t \in \mc{B}(t,\vy)}   \Big\{ \frac{1}{2}\,\htB_t^\top \Sigma^{-1}\htB_t \Big\}$ satisfies all conditions in Theorem~\ref{thm:existence_of_semisolution} as well.

\section{Conclusion}\label{sec:conclusion}

This paper studies a continuous-time robust Bayesian optimization problem under drift uncertainty, combining Bayesian learning with discrepancy-based robustness to address uncertainty around posterior estimates. Introducing ambiguity sets directly around Bayesian estimators and reformulating the problem through a feedback-type ambiguity framework, we overcome the time inconsistency inherent in the dynamic robust Bayesian optimization problem. This leads to a modified HJBI equation, from which we characterize the value function and the optimal feedback strategy through PDE methods.


Our framework accommodates a broad class of ambiguity structures, including Wasserstein, $L^p$, pathwise, and multiple-constraint formulations, within a unified analytical setting. Although we focus mainly on norm-based distances, the analysis only relies on certain structural properties of the induced feedback ambiguity set, namely convexity, closedness, and the translation-invariance arguments used to establish the required regularity of $g(t,\vy)$. Hence, other distance specifications may also be incorporated into the framework, provided that these properties are preserved. These results provide a general methodology for integrating Bayesian learning and robust control in continuous time, and offer a foundation for future extensions to more general market models and dynamic decision-making problems with partial information.

This Bayesian portfolio optimization model can naturally extend to other partial information frameworks where robustness can protect the decision maker from ambiguity, as we have only used the fact that the estimator can be represented via observable information. Meanwhile, if we consider the framework where the drift and volatility are both unknown, the procedure are also similar, as the partial information structure and time-varying feedback-type ambiguity are still applicable. The only difference is that we need to consider a more complicate linear-quadratic optimization in the source term. Future research may extend this framework to settings in which the HJBI equation no longer admits classical solutions. A typical example is CRRA utility, where one expects viscosity solutions rather than classical ones, which is an open problem in time-inconsistent frameworks.



\subsubsection*{Acknowledgement} 
Zongxia Liang is supported by the National Natural Science Foundation of China under grant no. 12271290. Yang Liu acknowledges financial support from the National Natural Science Foundation of China (Grant No. 12401624), Guangdong Science and Technology Program (Grant No. 2024QN11X076), Shenzhen Science and Technology Program (Grant No. RCBS20231211090814028, JCYJ20250604141203005, 2025TC0010) as well as The Chinese University of Hong Kong (Shenzhen) University Development Fund (Grant No. UDF01003336) and is partly supported by the Guangdong Provincial Key Laboratory of Mathematical Foundations for Artificial Intelligence (Grant No. 2023B1212010001). 

\bibliography{refs}
\bibliographystyle{plainnat} 

\appendix

\end{document}